\newtheorem{theorem}{Theorem}
\newtheorem{algorithm}[theorem]{Algorithm}
\newtheorem{conjecture}[theorem]{Conjecture}
\newtheorem{corollary}[theorem]{Corollary}
\newtheorem{lemma}[theorem]{Lemma}
\newtheorem{notation}[theorem]{Notation}
\newtheorem{remark}[theorem]{Remark}
\newenvironment{proof}[1][Proof]{\textbf{#1.} }{\ \rule{0.5em}{0.5em}}
\newdimen\dummy
\oddsidemargin \addtolength{\dummy}{72pt}
\begin{document}
\title{Computability of the packing measure of totally disconnected
self-similar sets}
\author{Marta Llorente \\
Dpt. An\'{a}lisis Econ\'{o}mico: Econom\'{\i}a cuantitativa,\\
Universidad Aut\'{o}noma de Madrid, \\
Campus de Cantoblanco, 28049 Madrid \\
e-mail: m.llorente@uam.es
 \and Manuel Mor\'{a}n \\
Dpt. An\'{a}lisis Econ\'{o}mico I, \\
Universidad Complutense de Madrid, \\
Campus de Somosaguas, 28223 Madrid\\
e-mail: mmoranca@ccee.ucm.es}
\maketitle
\date{}
\begin{abstract}
We present an algorithm to compute the exact value of the packing
measure of self-similar sets satisfying the so called SSC and
prove its convergence to the value of the packing measure. We also
test the algorithm with examples that show both, the accuracy of
the algorithm for the most regular cases and the possibility of
using the additional information provided by it to obtain formulas
for the packing measure of certain self-similar sets. For example,
we are able to obtain a formula for the packing measure of any
Sierpinski gasket with contractio factor in the interval $(0,1/3]$
(Theorem~\ref{packsier}).
\end{abstract}

\section{Introduction and definitions}
In this paper we deal with the problem of computing the value of
the packing measure of (totally disconnected) self-similar sets,
i.e., compact sets $E$ $ \subset \mathbb{R}^{n}$  that satisfy
$E=\bigcup_{i=1}^{N}f_{i}(E)$ for some system $\Psi
=\{f_{1,}f_{2,...,}f_{N}\}$ of contracting similitudes on
$\mathbb{R}^{n}$. The system $\Psi $ is said to satisfy the
\emph{\ open set condition} (\emph{OSC}) if there exists an open,
non-empty, bounded subset $\mathcal{O}\subset \mathbb{R}^{n}$ such
that
\begin{equation}
\bigcup_{i}f_{i}\mathcal{O}\subseteq \mathcal{O}\text{\qquad and
\qquad } f_{i}\mathcal{O}\cap f_{j}\mathcal{O}=\emptyset \quad
\forall i\neq j. \label{osc}
\end{equation}
From now on we shall call \emph{feasible open set }of the system
$\Psi $ (or of $E$) to any set $\mathcal{O}\subset \mathbb{R}^{n}$
satisfying (\ref{osc}). The self-similar sets with separation
conditions are probably the nowadays better understood fractal
sets. For example, it is well known that the \emph{similarity
dimension} of $E$, i.e., the unique solution $s$ of
$\sum_{i=1}^{N}r_{i}^{s}=1$, coincides with the most classical
concepts of metric dimension such as the Hausdorff, packing or
Minkowski dimension when the OSC is fulfilled. Associated to these
dimensions we have the corresponding measures such as the
Hausdorff, centered Hausdorff or packing measures ($H^{s},C^{s} $
and $P^{s}$, respectively). These metric measures are the
appropriate tool to study the size of zero Lebesgue measure sets
in $\mathbb{R}^{n}$, but in most cases they are hard to compute or
estimate computationally. For example, under the OSC, the set $E$
is easily seen to be an $s$-set, i.e., $0<H^{s}(E)<\infty$  (the
same inequality holds for $C^{s}$ and $P^{s}$), but the problem of
finding the precise value of any of these measures even for self
similar sets remains as a challenging open problem. Many efforts
has been done in this direction and the exact values or
estimations for the lower and upper bounds of measures are known
for some fractal sets (see \cite{[Ay]}-\cite{[GZ]},
\cite{[LLM1]}-\cite{[LLM3]}, \cite{[MO],[TTC],[Zh]} and the
references therein). Particularly, in
\cite{[B],[BZ],[BZZL],[Fen0],[GZ]} and \cite{[TTC]}, the authors
use the relation between the packing measure and the lower density
to obtain formulas for the packing measure of some totally
disconnected (but not necessarily self-similar) fractal sets.
Namely, in these papers it holds that
$$
P^{s}(A)= \Big[ \liminf_{d \to 0}
\frac{\mu(B(x,d))}{(2d)^{s}}\Big]^{-1} \textrm{ for } \mu
\text{-a.e. } x \in A \text{,}
$$
where $\mu$ is the natural uniformly distributed probability
measure defined on $A\subset \mathbb{R}^{n}$. Hence, in the
mentioned papers, the problem of computing $P^{s}(A)$ is reduced
to the problem of evaluating the lower density of $\mu$. We
propose to tackle the problem of computing $P^{s}(E)$ from a
different point of view. We continue here the development of the
program on computability of metric measures on self-similar sets,
whose foundations were laid in \cite{[MO]}. Following the lines
developed in \cite{[LLM3]}, where the same problem was considered
for the centered Hausdorff measure, we are going to build an
algorithm able to find the precise or approximate value of the
packing measure of \textit{totally disconnected self-similar sets
}(see (\ref{SSC})). To this aim, the above lower density approach
\ is not suitable as it involves measuring balls of arbitrarily
small radii. However, we know by \cite{[LLM1]} and \cite{[TC]}
that, in the totally disconnected case, it is not necessary to
pass to the limit. In \cite{[MO]} it was proved that
\begin{equation}
P^{s}(E)=\sup \left\{ \frac{(2d)^{s}}{\mu (B(x,d))}:x\in E,B(x,d)\subset
\mathcal{O}\right\} .  \label{moranpack}
\end{equation}%
Using this fact,\emph{\ }it is shown in \cite[Theorem 3.3]{[LLM1]}
that
\begin{equation}
P^{s}(E)=\max \left\{ h(x,d):x\in E,\text{ }\frac{r_{\min
}}{2}\leq d\leq |E| \text{ and }B(x,d)\subset \mathcal{O}\right\}
\text{ }  \label{pssc}
\end{equation}%
(see Remark~\ref{corretion}). Independently, Tricot \cite[Theorem
10.1]{[TC]} proved a version of (\ref{pssc}) where the condition
$B(x,d)\subset \mathcal{O}$ is not needed (see (\ref{tripackme})).
Nevertheless, for numerical purposes, small balls yield problems
such as rounding errors. Moreover, the present algorithm computes
the value\ of $P^{s}(E)$ using \ approximations to the density
functions $h(x,d)$ and it happens that the smaller the balls are
the bigger the error in the numerical approximation of $h(x,d)$
is. Thus, we need to refine Tricot\'s formula so the balls to be
explored are as large as possible. This is the content of our
first theorem.
\begin{theorem}
\label{keylema}Assume that the system $\Psi =\{f_{1,}f_{2,...,}f_{N}\}$ of
contracting similitudes on $\mathcal{\mathbb{R}}^{n}$ satisfies the SSC.
Then, for any $a\in (0,\frac{c}{r_{\min }}]$%
\begin{eqnarray}
P^{s}(E) &=&\max \left\{ h(x,d):x\in E,d\leq a\right\} =  \label{packssc} \\
&=&\max \left\{ h(x,d):x\in E,ar_{\min }\leq d\leq a\right\}
\label{packssc2}
\end{eqnarray}%
\bigskip \nolinebreak (see (\ref{SSC}), (\ref{c}) and (\ref{rminmax}) for
notation).
\end{theorem}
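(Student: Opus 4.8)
The plan is to exploit the self-similarity of the natural measure $\mu$. For a finite word $\omega=\omega_{1}\cdots\omega_{k}$ write $f_{\omega}=f_{\omega_{1}}\circ\cdots\circ f_{\omega_{k}}$, $r_{\omega}=r_{\omega_{1}}\cdots r_{\omega_{k}}$, $E_{\omega}=f_{\omega}(E)$; then $\mu(f_{\omega}(A))=r_{\omega}^{s}\mu(A)$ and $h(x,d)=(2d)^{s}/\mu(B(x,d))$. Under the SSC every $x\in E$ has a unique address $(\omega_{k})_{k\geq1}$, and by (\ref{c}) one has $B(x,d)\cap E\subseteq E_{\omega_{1}}$ whenever $d\leq c$; since $f_{\omega_{1}}$ is a similitude of ratio $r_{\omega_{1}}$, this gives $B(x,d)\cap E=f_{\omega_{1}}\bigl(B(f_{\omega_{1}}^{-1}(x),\,d/r_{\omega_{1}})\cap E\bigr)$ and hence the scaling identity
\[
h(x,d)=h\bigl(f_{\omega_{1}}^{-1}(x),\;d/r_{\omega_{1}}\bigr),\qquad 0<d\leq c .
\]
Read one way this \emph{enlarges} the radius; read the other way, replacing $(y,e)$ by $(f_{i}(y),\,r_{i}e)$ for a symbol $i$, it \emph{shrinks} it, the shrinking step being licit as soon as $r_{i}e\leq c$. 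The whole argument runs on this identity.

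First I would prove that (\ref{packssc}) and (\ref{packssc2}) have the same value, i.e.\ that the value sets $\{h(x,d):x\in E,\,d\leq a\}$ and $\{h(x,d):x\in E,\,ar_{\min}\leq d\leq a\}$ coincide. Given $(x,d)$ with $d\leq a$ but $d<ar_{\min}$, the hypothesis $a\leq c/r_{\min}$ gives $d<ar_{\min}\leq c$, so the identity applies and replaces $(x,d)$ by a pair with the same $h$-value and radius in $(d,\,ar_{\min}/r_{\min})=(d,a)$; iterating, the radius grows by a factor $\geq r_{\max}^{-1}>1$ at each step, so after finitely many steps it first falls in $[ar_{\min},a)$. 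Thus the two value sets agree, and that the maximum is attained on the compact set $E\times[ar_{\min},a]$ follows from the semicontinuity of $h$ exactly as for (\ref{pssc}). It remains to identify the common maximum with $P^{s}(E)$.

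For the bound ``$\leq P^{s}(E)$'', take $x\in E$ and $d\leq a$, choose $i_{0}$ with $r_{i_{0}}=r_{\min}$, note $r_{\min}d\leq c$ (again by $a\leq c/r_{\min}$), and shrink once: $h(x,d)=h(f_{i_{0}}(x),\,r_{\min}d)$; then keep applying the shrinking step with arbitrary symbols --- always licit, the radius being non-increasing and $\leq c$ --- until the radius drops below a prescribed $\varepsilon>0$. The resulting ball is centred on $E$ with radius $<\varepsilon$, hence contained in $\mathcal{O}_{\varepsilon}=\{z:\mathrm{dist}(z,E)<\varepsilon\}$, which for small $\varepsilon$ is a feasible open set of $\Psi$ (the first-level pieces being at mutual distance $\geq c$). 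By (\ref{moranpack}) applied to $\mathcal{O}_{\varepsilon}$, the $h$-value of that ball --- equal to $h(x,d)$ --- is $\leq P^{s}(E)$.

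The reverse inequality is the subtle step, and the one I expect to be the main obstacle. The obvious route --- pick a maximiser $(x_{0},d_{0})$ of Tricot's formula (\ref{tripackme}) and contract it into $[ar_{\min},a]$ --- breaks down because $d_{0}$ can be as large as $|E|$, far bigger than $a$, whereas the identity above contracts only balls of radius $\leq c$, and a large ball need not admit any value-preserving contraction. The remedy is to apply (\ref{moranpack}) with $\mathcal{O}=\mathcal{O}_{\varepsilon}$ for $\varepsilon$ so small that every ball $B(x,d)\subseteq\mathcal{O}_{\varepsilon}$ with $x\in E$ is forced to have $d<ar_{\min}$; this is possible because $E$ is totally disconnected: for small $\varepsilon$, $\mathcal{O}_{\varepsilon}$ is the \emph{disjoint} union, over all words $\omega$ of a fixed length $k$, of the sets $f_{\omega}(\mathcal{O}_{\varepsilon/r_{\omega}})$, each of diameter $r_{\omega}|E|+2\varepsilon\leq r_{\max}^{k}|E|+2\varepsilon$, so a connected set meeting $E$ lies in one of them, and fixing $k$ with $\tfrac12 r_{\max}^{k}|E|<ar_{\min}$ and then $\varepsilon$ small makes $d<ar_{\min}$. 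Then (\ref{moranpack}) gives
\[
P^{s}(E)=\sup\{h(x,d):x\in E,\ B(x,d)\subseteq\mathcal{O}_{\varepsilon}\}\leq\max\{h(x,d):x\in E,\ d\leq a\},
\]
which together with the previous paragraph establishes (\ref{packssc}), and hence (\ref{packssc2}).
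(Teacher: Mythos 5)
Your proposal is correct and follows essentially the same route as the paper's proof: the same scaling identity for $h$ (the paper's Lemma~\ref{bolatras}), the same iterated rescaling to push every radius into $[ar_{\min},a)$, and the same two applications of (\ref{moranpack}) to a large and a very small neighbourhood of $E$ (the latter decomposed into disjoint cylinder neighbourhoods). The only nitpick is that with closed balls the inclusion $B(x,d)\cap E\subseteq E_{\omega_{1}}$ can fail in the borderline case $d=c$ when the minimal separation is attained on $\partial B(x,d)$; one should work with open balls or invoke $\mu(\partial B(x,d))=0$, as the paper does.
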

We will prove Theorem~\ref{keylema} in Section~\ref{sec1}.
Section~\ref{description} is devoted to the construction of the
packing measure algorithm built upon (\ref{packssc2}) and in
Theorem~\ref{conv} we manage to prove its convergence to
$P^{s}(E)$. The structure of the algorithm is based on the
algorithm for the centered Hausdorff measure given in
\cite{[LLM3]}, however the extension of previous results is
considerably more involved. The underlying reason is that, while
in the centered Hausdorff measure case we could restrict the
search of optimal balls to balls intersecting at least two
different basic cylinder sets, the competing balls for the packing
measure have radii in a certain closed interval and the nature of
such an interval impedes the restriction to balls touching two
different basic cylinder sets (see Section~\ref{convergsec} for a
detailed discussion). In order to prove Theorem~\ref{conv}, we
need some results from \cite{[LLM3]} and some new lemmas which are
proved at the beginning of the section. Finally, in
Section~\ref{examples}, we test the efficiency of the algorithm as
a tool to give the precise value of the packing measure when the
contractio factors of the similitudes in $\Psi $ are small enough.
In this section we explain how the additional information provided
by the algorithm (the so-called\textbf{\ }candidates for optimal
balls) can be used to rigorously prove explicit formulae for the
exact value of certain self similar sets. For illustration we
collect here the case of the Sierpinski gasket.
\bigskip Let $S_{r}$ be the \emph{self-similar set} associated to the system
$\Psi =\{f_{1,}f_{2,}f_{3}\}$ where
\begin{eqnarray}
f_{1}(\vec{x}) &=&r\vec{x}  \label{sierclas} \\
f_{2}(\vec{x}) &=&r\vec{x}+(1-r,0)  \notag \\
f_{3}(\vec{x}) &=&r\vec{x}+(1-r)(\frac{1}{2},\frac{\sqrt{3}}{2}),  \notag
\end{eqnarray}%
$r\in (0,1)$ and $\vec{x}=(x,y)\in \mathcal{\mathbb{R}}^{2}$. If
$r\in (0,\frac{1}{2})$, then $S_{r}$ is a Sierpinski gasket
satisfying the SSC. We shall denote by $s(r)=\frac{-\log 3}{\log
r}$ to the similarity dimension of the set $S_{r}$. Our methods
prove that,
\begin{theorem}
\label{packsier}If $r\in (0,\frac{1}{3}]$, then
\begin{equation}
P^{s(r)}(S_{r})=\left( 2\frac{1-r}{r}\right) ^{s(r)}.  \label{sierfor}
\end{equation}
\end{theorem}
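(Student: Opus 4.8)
The plan is to combine Theorem~\ref{keylema} with the self-similar structure of $S_{r}$, reducing the computation of $P^{s}(S_{r})$ to a bounded family of competing balls. Throughout write $s=s(r)$, so that $3r^{s}=1$, i.e.\ $r^{s}=\tfrac13$ and $\big(2\tfrac{1-r}{r}\big)^{s}=3\,(2(1-r))^{s}$; let $\mu$ be the natural self-similar measure, for which every $k$-th level cylinder $f_{w_{1}}\!\cdots f_{w_{k}}(S_{r})$ has mass $3^{-k}$; and recall $h(x,d)=(2d)^{s}/\mu(B(x,d))$. Two elementary facts, both consequences of $r\le\tfrac13$, are used repeatedly: $|S_{r}|=1$ (the three fixed points lie in $S_{r}$, are pairwise at distance $1$, and $S_{r}$ sits in their convex hull); and the level-one pieces $f_{1}(S_{r}),f_{2}(S_{r}),f_{3}(S_{r})$ have diameter $r$, pairwise distance exactly $1-2r$ (hence $\ge r$), while the largest radius of a ball centred at a point of $S_{r}$ meeting only one of them is $1-r$, attained precisely at the three outer vertices (and for $d'>1-r$ such a ball in fact meets all three pieces).

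\emph{Lower bound.} Let $v$ be an outer vertex, say the fixed point of $f_{1}$. Since $f_{1}(S_{r})$ has diameter $r<1-r$ and contains $v$, the ball $B(v,1-r)$ contains all of $f_{1}(S_{r})$, while it meets $f_{2}(S_{r})\cup f_{3}(S_{r})$ in at most the two points $f_{2}(v),f_{3}(v)$, which lie at distance exactly $1-r$ from $v$. Hence $\mu(B(v,1-r))=\tfrac13$ and $h(v,1-r)=(2(1-r))^{s}/\tfrac13=\big(2\tfrac{1-r}{r}\big)^{s}$. Taking $a=1-r$ in Theorem~\ref{keylema} (for $r\le\tfrac13$ this lies in the admissible range $(0,c/r_{\min}]$), formula~(\ref{packssc}) gives $P^{s}(S_{r})\ge\big(2\tfrac{1-r}{r}\big)^{s}$.

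\emph{Upper bound.} With $a=1-r$, Theorem~\ref{keylema} reduces us to bounding $h(x,d)$ over all $x\in S_{r}$ with $r(1-r)\le d\le 1-r$. Fix such $(x,d)$ and let $f_{i}(S_{r})$ be the piece containing $x$. If $B(x,d)$ also meets a different piece $f_{j}(S_{r})$, then $d\ge\operatorname{dist}\!\big(f_{i}(S_{r}),f_{j}(S_{r})\big)=1-2r\ge r$, so $f_{i}(S_{r})\subseteq B(x,d)$, whence $\mu(B(x,d))\ge\tfrac13$ and $h(x,d)\le 3(2d)^{s}\le 3(2(1-r))^{s}=\big(2\tfrac{1-r}{r}\big)^{s}$. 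If $B(x,d)$ meets only $f_{i}(S_{r})$, then $f_{i}$ carries the configuration to $(x',d'):=(f_{i}^{-1}x,\,d/r)$ with $x'\in S_{r}$, $\mu(B(x,d))=\tfrac13\mu(B(x',d'))$, hence $h(x,d)=h(x',d')$, and $d'\in[1-r,(1-r)/r]$. If $d'\ge 1$ then $B(x',d')\supseteq S_{r}$, so $\mu(B(x',d'))=1$ and $h(x',d')=(2d/r)^{s}\le\big(2\tfrac{1-r}{r}\big)^{s}$ since $d\le 1-r$; if $d'=1-r$ then $B(x',d')$ still contains the piece holding $x'$, so again $h(x',d')\le 3(2(1-r))^{s}$. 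This leaves the subcase $d'\in(1-r,1)$, which is the crux: here $B(x',d')$ engulfs the whole piece containing $x'$ (as $d'>1-r\ge r$) and, since $d'>1-r$, reaches the two neighbouring pieces as well, and one must upgrade $\mu(B(x',d'))\ge\tfrac13$ to $\mu(B(x',d'))\ge\big(rd'/(1-r)\big)^{s}$ in order to conclude $h(x',d')\le\big(2\tfrac{1-r}{r}\big)^{s}$. For $r\le\tfrac13$ the pieces separate nicely — their orthogonal projections onto a suitable line do not overlap — so the mass that $B(x',d')$ collects from the neighbouring pieces is governed by the monotone distribution function of the projected self-similar measure, and the estimate reduces to checking a short finite list of extremal positions of $x'$: precisely the \emph{candidates for optimal balls} that the algorithm of Section~\ref{description} produces, each of which turns out to be dominated by the outer-vertex ball of the lower bound. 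Combining all cases yields $P^{s}(S_{r})\le\big(2\tfrac{1-r}{r}\big)^{s}$, and with the lower bound this is~(\ref{sierfor}).

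I expect the subcase $d'\in(1-r,1)$ — a ball straddling two or three pieces without containing all of $S_{r}$ — to be the main obstacle. For straddling balls with $d\le 1-r$ it sufficed that the ball swallow one whole piece, giving $\mu\ge\tfrac13$; here that is too weak, and one genuinely needs a lower bound on $\mu(B(x',d'))$ that grows with $d'$, which forces an accounting of the mass picked up from the neighbouring pieces. The hypothesis $r\le\tfrac13$ is exactly what keeps this bookkeeping finite and monotone — the pieces' projections stay non-overlapping and the candidate list stays short — and presumably for $r>\tfrac13$ the projections overlap, extra candidates appear, and~(\ref{sierfor}) need no longer hold.
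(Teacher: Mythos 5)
Your setup matches the paper's: both proofs get the lower bound from the outer-vertex ball $B(v,1-r)$ of measure $r^{s}=\tfrac13$, and both use Theorem~\ref{keylema} to reduce the upper bound to a compact family of radii, disposing of the easy configurations (ball swallowing a whole first-level piece, ball swallowing all of $S_r$) exactly as you do. But the subcase you yourself flag as the crux --- $d'\in(1-r,1)$, a ball centred in one piece that reaches into the other two without containing everything --- is left unproved, and it is precisely there that all the mathematical content of the theorem lives. Your proposed resolution (``the estimate reduces to checking a short finite list of extremal positions of $x'$, the candidates produced by the algorithm'') is not a proof: the algorithm's output is empirical evidence, not a verification, and nothing in your argument shows that the extremal positions form a finite list or that non-candidate positions are dominated. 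The auxiliary claim that the needed bound $\mu(B(x',d'))\ge\bigl(rd'/(1-r)\bigr)^{s}$ follows from non-overlapping projections onto a line is asserted, not established, and is not how the inequality is actually obtained.

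The paper closes this gap with two concrete steps you would need to supply. First, a monotonicity-in-the-centre statement: for $x\in f_{1}(S_{r})$ and $d$ in the relevant range, $B((0,0),d)\cap S_{r}\subset B(x,d)\cap S_{r}$ (inclusion~(\ref{inclusion}), proved via the observation that $(0,0)$ maximizes the distance to any point of $f_{2}(S_{r})\cup f_{3}(S_{r})$), which reduces the whole upper bound to the single centre $x'=(0,0)$. Second, an infinite --- not finite --- decomposition $[1-r,1]=\bigcup_{k\ge1}[1-r+r^{k+1},\,1-r+r^{k}]$, on each piece of which one has the explicit mass bound $\mu(B((0,0),d))\ge r^{s}+2r^{(k+1)s}$ (the ball picks up at least the generation-$(k+1)$ cylinder nearest to the origin inside each of $f_{2}(S_{r})$ and $f_{3}(S_{r})$); the conclusion then rests on the elementary inequality $(1+\tfrac{r^{k}}{1-r})^{s}\le 1+2r^{ks}$, equation~(\ref{expresion}), verified by comparing derivatives. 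Note also that the role of the hypothesis $r\le\tfrac13$ is not the projection separation you conjecture: it is that $s(r)\le 1$, which is exactly what makes the derivative comparison in~(\ref{expresion}) go through (the paper's remark after the proof points out that for $r>\tfrac13$, where $s(r)>1$, one would need a different decomposition of $[1-r,1]$ even though the same balls are still selected numerically).
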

Theorem~\ref{packsier} extends the formula given by Taylor and Tricot in
\cite{[TTC]} for the case where all the contraction factors are equal to $%
\frac{1}{3}$. As an illustration we indicate at the end of this
section how the algorithm together with the method used in the
proof of (\ref{sierfor}) enable us to recover the known formulas
for the value of the packing measure with an alternative proof. We
also discuss the cases where the contractio ratios are not small
enough to get precise values. Next, we list the definitions and
notation used throughout the paper. Given the system $\Psi
=\{f_{1},...,f_{N}\}$\ of contracting similitudes on $\mathbb{R}^{n}$,
 we shall denote by $r_{i}\in (0,1)$ the \emph{similarity ratio} of $f_{i}\in \Psi $ and write
\begin{equation}
r_{\min }:=\min_{i=1,\ldots ,N}r_{i\text{ }}\qquad \text{and}\qquad r_{\max
}:=\max_{i=1,\ldots ,N}r_{i}\text{.}  \label{rminmax}
\end{equation}
The self-similar set $E$\ (associated to $\Psi $) is\emph{\ totally
disconnected} if
\begin{equation}
f_{i}(E)\cap f_{j}(E)=\emptyset \quad \forall i\neq j,i,j\in \{1,...,N\},
\label{SSC}
\end{equation}
this condition is known as \emph{Strong Separation Condition}
(SSC). We shall assume all the time SSC on the system $\Psi $ and
write
\begin{equation}
c:=\min_{i,j\in \{1,...,N\}}d_{\inf }(f_{i}(E),f_{j}(E))>0,  \label{c}
\end{equation}
where $d_{\inf }(f_{i}(E),f_{j}(E))$ is the distance that
separates $f_{i}(E) $ and $f_{j}(E)$. Regarding the code space we
shall keep the following notation. Let $M:=\{1,...,N\}$ and
\begin{equation*}
M^{k}=\{\mathbf{i}_{k}=(i_{1},...,i_{k}):i_{j}\in M\quad \forall j=1,...,N\}.
\end{equation*}
Given $\mathbf{i}_{k}=i_{1}i_{2}..i_{k}\in M^{k}$, we shall write
$f_{\mathbf{i}_{k}}$ for the similitude
$f_{\mathbf{i}_{k}}=f_{i_{1}}\circ f_{i_{2}}\circ ...\circ
f_{i_{k}}$ with similarity ratio
$r_{\mathbf{i}_{k}}=r_{i_{1}}r_{i_{2}}...r_{i_{k}}$ and given
$A\subset \mathbb{R}^{n}$, we shall denote by
$A_{\mathbf{i}_{k}}=f_{\mathbf{i}_{k}}(A)$ and refer to the sets
$E_{\mathbf{i}_{k}}=f_{\mathbf{i}_{k}}(E)$ as the \emph{cylinder
sets of generation }$k$. The self-similar set $E$ can be written
as the image of the \emph{space of codes} $\mathbb{M}:=M^{\infty
}=M\times M\times ...$ under the\emph{\ projection mapping}\ $\pi
:\mathbb{M}\rightarrow E$ given by
\begin{equation}
\pi (\mathbf{i})=\cap _{k=1}^{\infty }f_{\mathbf{i}(k)}(E)=\cap
_{k=1}^{\infty }E_{\mathbf{i}(k)}  \label{proj}
\end{equation}%
where $\mathbf{i}(k)$ denotes the curtailment $i_{1}...i_{k}\in
M^{k}$ of $\mathbf{i}$ and $f_{\mathbf{i}(k)}=f_{i_{1}}\circ
f_{i_{2}}\circ ...\circ f_{i_{k}}$ We shall denote by $\mu $ the
\textit{natural probability measure}, or \textit{normalized
Hausdorff} \textit{measure}, defined on the ring of cylinder sets
by
\begin{equation}
\mu (E_{\mathbf{i}})=r_{\mathbf{i}}^{s},  \label{mucil}
\end{equation}
and then extended to Borel subsets of $E$. Given $A\subset
\mathbb{R}^{n}$, we shall write $|A|$ for the diameter of $A$ and
for any $\delta \in \mathbb{R}^{+}$, $A_{\delta }=\{x\in
\mathbb{R}^{n}:dist(x,y)\leq \delta \}$ will be the $\delta
-$\textit{parallel neighborhood} of $A$, where $dist(\cdot ,\cdot
)$ denotes the Euclidean distance. The closed ball centered at $x$
and with radius $r>0$ will be denoted by $B(x,d)$ and for the open
ball we shall write $B^{\prime }(x,d)=\left\{ y\in
\mathbb{R}^{n}:dist(x,y)<d\right\} $. Throughout the paper we
shall assume without lost of generality that $R:=|E|=1$.
\section{The packing measure of self-similar sets satisfying the SSC.\label{sec1}}
The \emph{packing measures }were introduced by Tricot
\cite{[T0],[T]} , Taylor and Tricot \cite{[TT],[TTC]} and Sullivan
\cite{[S]}, as the natural metric measure to analyze Brownian
paths and limit sets of Kleinian groups. They are defined by a
two-stage definition using efficient packings: first the
\emph{packing premeasure} is defined by
\begin{equation}
P_{0}^{s}(A)=\lim_{\delta \rightarrow 0}P_{\delta }^{s}(A)  \label{preme}
\end{equation}%
where
\begin{equation}
P_{\delta }^{s}(A)=\sup \left\{ \sum_{i=1}^{\infty }\left\vert
B_{i}\right\vert ^{s}:\left\vert B_{i}\right\vert \leq \delta
,i=1,2,3...\right\}  \label{deltapack}
\end{equation}%
is a non-decreasing \ set function with respect to $\delta $ and the
supremum is taken over all $\delta -$\emph{packings }of $A,$ i.e., countable
collections of disjointed Euclidean balls centered at $A$ and with diameter
smaller than $\delta .$ The \emph{packing measure} is then given by
\begin{equation}
P^{s}(A)=\inf \left\{ \sum_{i=1}^{\infty }P_{0}^{s}(U_{i}):A\subset
\bigcup_{i=1}^{\infty }U_{i}\right\} .  \label{packing}
\end{equation}%
However, this second step (\ref{packing}) may be omitted if the
measured set is (as in our case) a compact set with finite packing
premeasure (see \cite{[Fen]}). Theorem~\ref{keylema}, based on
(\ref{moranpack}), gives an alternative characterization of the
packing measure for self-similar sets satisfying the SSC more
suitable to the computability problem. The main advantage of
working with self-similar sets satisfying the SSC is that we can
guarantee that the supremum in (\ref{moranpack}) is attained (see
\cite[Theorem 3.3]{[LLM1]} and \cite[Theorem 10.1]{[TC]}).
\begin{remark}
\label{corretion}We want to clarify that the statement of \cite[Theorem 3.3]%
{[LLM1]} contains a typo: $B(x,d)\subset \mathcal{O}$ is missing in the
formula of the packing measure. The precise statement which was proved in
\cite{[LLM1]} is
\begin{equation*}
P^{s}(E)=\sup \left\{ \frac{(2d)^{s}}{\mu (B(x,d))}:x\in E,\text{ }\frac{%
r_{\min }}{2}\leq d\leq |E|\text{ and }B(x,d)\subset \mathcal{O}\right\} .
\end{equation*}
\end{remark}
A further advantage of Theorem~\ref{keylema} is that we are able to get rid
of the condition $B(x,d)\subset \mathcal{O}$ at the same time that we
constrain the set of balls where the supremum is to be obtained to balls
having radii on a closed interval bounded away from zero. These results are
possible due to the invariance of the \textit{density function}
\begin{equation*}
\ h(x,d):=\frac{(2d)^{s}}{\mu (B(x,d))}
\end{equation*}
under certain inverse images of the similarity functions of the
system $\Psi$. We recall this fact widely used throughout the
paper.
\begin{lemma}
\label{bolatras}Let $(x,d)\in E\times \mathbb{R}^{+}$ and
$\mathbf{i}\in\mathbb{M}$ with $\pi (\mathbf{i})=x$.
\begin{description}
\item[i)] If $B^{\prime }(x,d)\cap E\subset E_{\mathbf{i}(k)}$,
then
\begin{equation}
h(x,d)=h(f_{\mathbf{i}(k)}^{-1}(x),\frac{d}{r_{\mathbf{i}(k)}})\text{.}
\label{replace1}
\end{equation}
\item[ii)] Assume that for some $k\in \mathbb{N} $, $d\leq
cr_{\mathbf{i}(k)}$ holds, with $r_{\mathbf{i}(k)}=1$ if $k=0$.
Then
\begin{equation}
h(x,d)=h(f_{\mathbf{i}(k+1)}^{-1}(x),\frac{d}{r_{\mathbf{i}(k+1)}}).
\label{replace 3}
\end{equation}
\item[iii)] Let $i\in M$ be such that $r_{i}=r_{\min }$ and
suppose that $d\leq \frac{c}{r_{\min }}$. Then
\begin{equation}
h(x,d)=h(f_{i}(x),dr_{i})  \label{replace 2}
\end{equation}
\end{description}
\end{lemma}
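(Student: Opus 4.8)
The plan is to prove \textbf{i)} first, since it is just the scaling law obeyed by the density function $h$, and then to reduce \textbf{ii)} and \textbf{iii)} to it by using the separation constant $c$ of (\ref{c}) to turn the metric hypotheses on $d$ into the geometric inclusion that \textbf{i)} requires. Two facts underlie \textbf{i)}. First, $f_{\mathbf{w}}$ is a similarity of ratio $r_{\mathbf{w}}$, so it maps $B(f_{\mathbf{w}}^{-1}(x),\,d/r_{\mathbf{w}})$ onto $B(x,d)$ and restricts to a bijection of $E$ onto $E_{\mathbf{w}}$. Second, the natural measure scales correctly: $\mu(f_{\mathbf{w}}(A))=r_{\mathbf{w}}^{s}\,\mu(A)$ for every Borel $A\subseteq E$. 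I would get the second fact by checking it on cylinders, where $f_{\mathbf{w}}(E_{\mathbf{v}})=E_{\mathbf{wv}}$ and $\mu(E_{\mathbf{wv}})=r_{\mathbf{wv}}^{s}=r_{\mathbf{w}}^{s}r_{\mathbf{v}}^{s}=r_{\mathbf{w}}^{s}\mu(E_{\mathbf{v}})$ by (\ref{mucil}), and then extending to all Borel subsets of $E$ by the usual uniqueness of measure extension.

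\textbf{Part i).} Put $\mathbf{w}=\mathbf{i}(k)$, $\rho=r_{\mathbf{w}}$ and $y=f_{\mathbf{w}}^{-1}(x)$; since $x=\pi(\mathbf{i})\in E_{\mathbf{w}}$, the point $y$ lies in $E$. Because $B'(x,d)\cap E\subseteq E_{\mathbf{w}}$, the ball $B(x,d)$ meets $E$ only inside $E_{\mathbf{w}}$, and $f_{\mathbf{w}}$ carries $B(y,d/\rho)\cap E$ onto $B(x,d)\cap E_{\mathbf{w}}$; hence
\begin{equation*}
\mu(B(x,d))=\mu(B(x,d)\cap E_{\mathbf{w}})=\rho^{s}\,\mu\big(B(y,d/\rho)\cap E\big)=\rho^{s}\,\mu(B(y,d/\rho)).
\end{equation*}
Since $(2d)^{s}=\rho^{s}(2d/\rho)^{s}$, dividing $(2d)^{s}$ by this gives $h(x,d)=(2d/\rho)^{s}/\mu(B(y,d/\rho))=h(y,d/\rho)$, which is (\ref{replace1}).

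\textbf{Part ii).} I would show that the hypothesis $d\le c\,r_{\mathbf{i}(k)}$ already forces $B'(x,d)\cap E\subseteq E_{\mathbf{i}(k+1)}$; then \textbf{i)} applied with $k+1$ in place of $k$ yields (\ref{replace 3}). The complement telescopes, $E\setminus E_{\mathbf{i}(k+1)}=\bigcup_{m=0}^{k}\big(E_{\mathbf{i}(m)}\setminus E_{\mathbf{i}(m+1)}\big)$ with $E_{\mathbf{i}(0)}=E$, and by (\ref{SSC}) the $m$-th summand equals $f_{\mathbf{i}(m)}\big(\bigcup_{j\ne i_{m+1}}E_{j}\big)$. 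Fix $m\in\{0,\ldots,k\}$ and a point $p=f_{\mathbf{i}(m)}(q)$ of this summand, with $q\in E_{j}$, $j\ne i_{m+1}$; since $x\in E_{\mathbf{i}(m+1)}=f_{\mathbf{i}(m)}(E_{i_{m+1}})$ we may write $x=f_{\mathbf{i}(m)}(\xi)$ with $\xi\in E_{i_{m+1}}$, and then
\begin{equation*}
|x-p|=r_{\mathbf{i}(m)}\,|\xi-q|\ge r_{\mathbf{i}(m)}\,d_{\inf}(E_{i_{m+1}},E_{j})\ge r_{\mathbf{i}(m)}\,c\ge r_{\mathbf{i}(k)}\,c\ge d,
\end{equation*}
where $r_{\mathbf{i}(m)}\ge r_{\mathbf{i}(k)}$ for $m\le k$ because it is a product of ratios each $\le 1$. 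Thus no point of $E\setminus E_{\mathbf{i}(k+1)}$ lies in $B'(x,d)$, so $B'(x,d)\cap E\subseteq E_{\mathbf{i}(k+1)}$ and \textbf{i)} applies. For $k=0$ only the summand $E\setminus E_{i_{1}}$ occurs, the estimate reads $|x-p|\ge d_{\inf}(E_{i_{1}},E_{j})\ge c\ge d$, and this matches the convention $r_{\mathbf{i}(0)}=1$.

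\textbf{Part iii).} This is \textbf{ii)} at level $k=0$, read for the point $f_{i}(x)$. Choose any $\mathbf{j}\in\mathbb{M}$ with $\pi(\mathbf{j})=x$ and set $\mathbf{w}=i\mathbf{j}$, so that $\pi(\mathbf{w})=f_{i}(x)$ and $\mathbf{w}(1)=i$. Since $r_{i}=r_{\min}$ and $d\le c/r_{\min}$ we have $dr_{i}\le c=c\,r_{\mathbf{w}(0)}$, hence \textbf{ii)} applied to the pair $(f_{i}(x),\,dr_{i})$ at level $k=0$ gives $h(f_{i}(x),dr_{i})=h\big(f_{i}^{-1}(f_{i}(x)),\,dr_{i}/r_{i}\big)=h(x,d)$, which is (\ref{replace 2}). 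The only genuinely technical step is the chain of inequalities in \textbf{ii)}: separating $x$ from \emph{every} competing cylinder --- not merely those of generation $k$ --- needs the telescoping through $E_{\mathbf{i}(0)}\supseteq\cdots\supseteq E_{\mathbf{i}(k+1)}$ together with $r_{\mathbf{i}(m)}\ge r_{\mathbf{i}(k)}$; granted that inclusion, \textbf{i)} and \textbf{iii)} are routine. (A small point --- that the open-ball hypothesis of \textbf{i)} suffices even though $h$ is defined through the closed ball --- is handled by a standard limiting argument over radii $d'<d$, for which $B(x,d')\cap E\subseteq E_{\mathbf{w}}$ holds directly.)
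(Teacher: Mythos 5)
Your argument follows the paper's strategy for parts \textbf{i)} and \textbf{ii)}. Part \textbf{i)} is the scaling law for $\mu$ under $f_{\mathbf{i}(k)}$, exactly as in the paper; for part \textbf{ii)} both proofs reduce to the same inclusion $B^{\prime}(x,d)\cap E\subset E_{\mathbf{i}(k+1)}$, and your telescoping decomposition of $E\setminus E_{\mathbf{i}(k+1)}$ is just the direct form of the paper's contradiction argument (your index $m$ plays the role of the paper's maximal agreement index $q=\max\{l:\mathbf{j}(l)=\mathbf{i}(l)\}$), with the identical chain $|x-p|\ge c\,r_{\mathbf{i}(m)}\ge c\,r_{\mathbf{i}(k)}\ge d$. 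For part \textbf{iii)} you take a genuinely different, and arguably cleaner, route: you deduce it from \textbf{ii)} at level $k=0$ applied to the pair $(f_{i}(x),dr_{i})$, using $dr_{i}\le c$, whereas the paper argues directly that $B^{\prime}(f_{i}(x),dr_{i})\subset (E_{i})_{c}$ and $(E_{i})_{c}\cap E=E_{i}$, so that intersecting with $E_{i}$ is the same as intersecting with $E$. Your reduction is valid and buys you one fewer geometric argument at the cost of nothing.

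The one place your write-up is incomplete is the open-versus-closed ball issue in part \textbf{i)}. From the hypothesis $B^{\prime}(x,d)\cap E\subset E_{\mathbf{w}}$ you cannot assert, as your main computation does, that the closed ball $B(x,d)$ meets $E$ only inside $E_{\mathbf{w}}$: points of $E\setminus E_{\mathbf{w}}$ could lie on the sphere $\partial B(x,d)$. Your parenthetical limiting argument over radii $d^{\prime}<d$ only yields, in the limit, the identity $\mu(B^{\prime}(x,d))=\rho^{s}\mu(B^{\prime}(y,d/\rho))$ for the \emph{open} balls; to convert this into the statement about the closed balls that define $h$ you still need $\mu(\partial B(x,d))=0$ (and likewise for the rescaled ball). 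That null-boundary property is precisely the fact the paper invokes at this point, citing Mattila's structure theorem for self-similar sets (reference [MT], see also Remark~3.2 of [LLM1]). With that standard fact added, your proof is complete.
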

\begin{proof}
 In the situation of \textit{i)} we may write
\begin{equation*}
f_{\mathbf{i}(k)}^{-1}(B^{\prime }(x,d)\cap E)
=f_{\mathbf{i}(k)}^{-1}(B^{\prime }(x,d)\cap
E_{\mathbf{i}(k)})=B^{\prime
}(f_{\mathbf{i}(k)}^{-1}(x),\frac{d}{r_{\mathbf{i}(k)}})\cap E.
\end{equation*}
Then, (\ref{replace1}) holds because, by \cite{[MT]}, we know that
the boundary of any given ball is a $\mu $-null set (see
Remark~3.2 in \cite{[LLM1]}). Assume now that $d\leq
cr_{\mathbf{i}(k)}$. If
\begin{equation}
B^{\prime }(x,d)\cap E\subset E_{\mathbf{i}(k+1)}\text{,}
\label{ik1inclusion}
\end{equation}
then (\ref{replace 3}) holds trivially from \textit{i)}. So we
need only to show that (\ref{ik1inclusion}) holds if $d\leq
cr_{\mathbf{i}(k)}$. Assume, on the contrary, that there exists
$y\in E\cap B^{\prime }(x,d)\setminus E_{\mathbf{i}(k+1)}$ and let
$q=\max \left\{ l:\mathbf{j}(l)= \mathbf{i}(l)\right\} $ where
$\mathbf{j\in }\mathbb{M}$ is such that $\pi (\mathbf{j})=y$.
Then, $q\leq k$, $f_{\mathbf{j}\left( q\right) }^{-1}(y)\in
E_{\mathbf{j}(q+1)}$, $f_{\mathbf{i}\left( q\right) }^{-1}(x)\in
E_{\mathbf{i}(q+1)}$ with $\mathbf{j(}q+1)\neq \mathbf{i(}q+1)$
and
\begin{equation*}
dist(f_{\mathbf{i}\left( q\right) }^{-1}(x),f_{\mathbf{j}\left( q\right)
}^{-1}(y))=r_{\mathbf{i}\left( q\right) }^{-1}dist(x,y)
\end{equation*}
with $f_{\mathbf{i}\left( q\right) }^{-1}=f_{\mathbf{j}\left( q\right)
}^{-1} $ and $r_{\mathbf{i}(q)}^{-1}=1$ if $q=0$. Therefore,
\begin{equation*}
d>dist(x,y)=dist(f_{\mathbf{j(}q)}^{-1}(y),f_{\mathbf{i}\left( q\right)
}^{-1}(x))r_{\mathbf{i}\left( q\right) }\geq cr_{\mathbf{i}\left( q\right)
}\geq cr_{\mathbf{i}(k)},
\end{equation*}
giving the desired contradiction. This shows
\textit{(\ref{ik1inclusion}) } and concludes the proof of
\textit{ii).} Lastly, if $(x,d)\in E\times \mathbb{R}^{+}$ with
$d\leq \frac{c}{r_{\min }}$ and $r_{i}=r_{\min }$, then $
B^{\prime }(f_{i}(x),dr_{i})\subset (E_{i})_{c}$ and
$(E_{i})_{c}\cap E=E_{i} $, so we may write
\begin{eqnarray*}
f_{i}(B^{\prime }(x,d)\cap E) &=&B^{\prime }(f_{i}(x),dr_{i})\cap E_{i}= \\
&=&B^{\prime }(f_{i}(x),dr_{i})\cap (E_{i})_{c}\cap E=B^{\prime
}(f_{i}(x),dr_{i})\cap E
\end{eqnarray*}
and \textit{iii)} follows.
\end{proof}

Now we turn to the proof of Theorem~\ref{keylema} whose aim is a
reduction of the set of balls where the supremum in
(\ref{moranpack}) is to be computed. From a computational point of
view this reduction is more efficient if the balls to be explored
are larger, so the idea is to seek the largest possible balls
which still give the packing measure. In
\cite[Theorem~10.1]{[TC]}, Tricot obtained the following result in
this direction: "If $E$ is totally disconnected then
\begin{equation}
P_{0}^{s}E=P^{s}E=\frac{1}{m}  \label{tripackme}
\end{equation}%
where $m=\inf I_{E}$ with $I_{E}=\{\frac{\mu
(B(x,d))}{(2d)^{s}}:x\in E,$ $\frac{r_{\min }}{r_{\max }}c\leq
d\leq \frac{1}{r_{\max }}c\}$". Theorem~\ref{keylema} is an
extension of Tricot's result more suitable to our purposes and
proved with different arguments.

\begin{proof}[Proof of Theorem~\protect\ref{keylema}]
For $\delta >0$, let $\mathcal{A}(\delta ):=\left\{ (x,d):x\in
E,0<d\leq \delta \right\} $ and let $S(\delta ):=\sup
\left\{h(x,d):(x,d)\in A(\delta )\right\} .$ Consider also
\begin{eqnarray*}
\mathcal{A}(\delta _{1},\delta _{2}) &:&=\left\{ (x,d):x\in E,\delta
_{1}\leq d\leq \delta _{2}\right\} \text{ \ \ and} \\
S(\delta _{1},\delta _{2}) &:&=\sup \left\{ h(x,d):(x,d)\in
\mathcal{A}(\delta _{1},\delta _{2})\right\}.
\end{eqnarray*}
Let $a,b\in (0,\frac{c}{r_{\min }}]$ and suppose without loss of generality
that\textbf{\ }$b<a$ . We are going to show first that
\begin{equation}
S(a)=S(b).  \label{igualdadS}
\end{equation}
Notice that,\textbf{\ }if $a\in (0,\frac{c}{r_{\min }}]$,\textbf{\
}(\ref{replace 2}) implies
\begin{equation*}
S(a)\leq S(ar_{\min }).
\end{equation*}%
The opposite inequality also holds as
$\mathcal{A}(ar_{\min})\subset \mathcal{A}(a)$. Thus, for any
$a\in (0,\frac{c}{r_{\min }}]$ and $k\in \mathbb{N}^{+}$, we have
\begin{equation}
S(a)=S(ar_{\min })=S(ar_{\min }^{k})\text{.}  \label{igualdadSsmall}
\end{equation}
This shows that, for any $k\in \mathbb{N}^{+}$\ such that
$ar_{\min }^{k}<b$, $S(b)\leq S(a)=S(ar_{\min }^{k})\leq S(b)$
whence
\begin{equation*}
S(b)=S(a),
\end{equation*}
concluding the proof of (\ref{igualdadS}). Now take $(x,d)\in
\mathcal{A}(b)$ and let $\mathbf{i\in }\mathbb{M}$ be such that
$\pi (\mathbf{i)=}x$ and $ k:=\min \left\{ l\in
\mathbb{N}:\frac{d}{r_{\mathbf{i}(l)}}\geq ar_{\min }\right\} $.
Then,
\begin{equation*}
ar_{\min }\leq \frac{d}{r_{\mathbf{i}(k)}}\leq
\frac{d}{r_{\mathbf{i} (k-1)}r_{\min }}<a\leq \frac{c}{r_{\min }},
\end{equation*}%
where the first and the third inequalities hold by the selection
of $k.$ This shows that $\
(f_{\mathbf{i}(k)}^{-1}(x),\frac{d}{r_{\mathbf{i}(k)}})\in \mathcal{A}(ar_{\min },a).$ Moreover, since $d\leq cr_{\mathbf{i}(k-1)}$%
, part\textit{\ ii)} of Lemma~\ref{bolatras} implies that
$h(f_{\mathbf{i}(k)}^{-1}(x),\frac{d}{r_{\mathbf{i}(k)}})=h(x,d)$
and, by (\ref{igualdadS}), we obtain
\begin{equation*}
S(b)\leq S(ar_{\min },a)\leq S(a)=S(b)
\end{equation*}
that is
\begin{equation}
S(a)=S(b)=S(ar_{\min },a)  \label{newigualdad}
\end{equation}
for any $a$, $b\in (0,\frac{c}{r_{\min }}]$.
Lastly we prove that $P^{s}(E)=S(ar_{\min },a)$. To this end, let
$k:=\min \{l\in \mathbb{N}^{+}:2r_{\max }^{k}+cr_{\min }^{k}\leq
c\}$ and take two feasible open sets
for $E$,\ namely $\mathcal{O}_{1}\mathcal{=}(E)_{\frac{c}{2}}$ and $\mathcal{%
O}_{2}\mathcal{=}(E)_{r_{\min }^{k}\frac{c}{2}}$. On one hand,
(\ref{newigualdad}) with $b=\frac{c}{2}$ together with
(\ref{moranpack}) applied to $\mathcal{O}_{1}$ give
\begin{equation}
P^{s}(E)\geq S(\frac{c}{2})=S(ar_{\min },a)\text{.}  \label{ineq}
\end{equation}
On the other hand, the connectivity of the Euclidean balls,\ imply
that any $ B(x,d)\subset \mathcal{O}_{2}$\ with $x\in
E_{\mathbf{i}(k)}$\ must be contained in some $(E_{i})_{r_{\min
}^{k}\frac{c}{2}},i\in M,$\ so it must verify that
\begin{equation*}
d\leq |E_{\mathbf{i}(k)}|+\frac{c}{2}r_{\min }^{k}\leq r_{\max }^{k}+\frac{c%
}{2}r_{\min }^{k}\text{.}
\end{equation*}
Hence, if we apply (\ref{moranpack}) to $\mathcal{O}_{2}$, (\ref{ineq})
implies that
\begin{equation*}
S(ar_{\min },a)\leq P^{s}(E)\leq S(r_{\max
}^{k}+\frac{c}{2}r_{\min}^{k})\leq S(\frac{c}{2})=S(ar_{\min },a)
\end{equation*}
which shows the desired equality. Notice that, since the
$\mu-$measure of boundaries of balls is null, $h$ is a continuous
function in $E\times \mathbb{R}^{n}$ and the supremum in the
definition of $S(ar_{\min },a)$ is attained (the set
$\mathcal{A}(ar_{\min },a)$ is compact), this observation ends the
proof of the theorem.
 \end{proof}

 The following lemma will allow us to narrow the search for
optimal balls to those whose boundary intersects $E$ (see also
Corollary~\ref{newattain}).
\begin{lemma}
\label{attainE}Let $a\in (0,\frac{c}{r_{\min }}]$ and
$(x_{0},d_{0})\in E\times \lbrack ar_{\min },a]$ be such that
\begin{equation}
P^{s}(E)=h(x_{0},d_{0})=\frac{(2d_{0})^{s}}{\mu (B(x_{0},d_{0}))}.
\label{minpair}
\end{equation}
Then, either $d_{0}=a$ or
\begin{equation}
\partial B(x_{0},d_{0})\cap E\neq \emptyset .  \label{nonemptybdd}
\end{equation}
\end{lemma}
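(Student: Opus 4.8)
Suppose $(x_0,d_0)$ realizes the maximum in (\ref{packssc2}) with $d_0<a$ and $\partial B(x_0,d_0)\cap E=\emptyset$; I will derive a contradiction with the maximality by producing a pair with strictly larger density, or else reduce to the already-known formula. The key observation is that $h(x,d)=(2d)^s/\mu(B(x,d))$, viewed as a function of $d$ alone with $x=x_0$ fixed, has numerator $(2d)^s$ strictly increasing and continuous in $d$, while the denominator $\mu(B(x_0,d))$ is non-decreasing and \emph{right-continuous but jumps only when $\partial B(x_0,d)$ meets $E$} — more precisely, $\mu(B(x_0,d))=\mu\bigl(B'(x_0,d)\bigr)$ whenever $\partial B(x_0,d)\cap E=\emptyset$, since the boundary sphere carries no $\mu$-mass anyway (by \cite{[MT]}, as recalled in Lemma~\ref{bolatras}), and moreover $\mu(B'(x_0,d))$ is \emph{locally constant} in $d$ on any interval where the sphere stays disjoint from $E$. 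Indeed, if $\partial B(x_0,d_0)\cap E=\emptyset$, then by compactness of $E$ there is an $\varepsilon>0$ with $\partial B(x_0,d)\cap E=\emptyset$ for all $d\in(d_0-\varepsilon,d_0+\varepsilon)$, and on this interval $B(x_0,d)\cap E$ does not change: any point of $E$ at distance exactly between $d_0-\varepsilon$ and $d_0+\varepsilon$ from $x_0$ would lie on some intermediate sphere. Hence $\mu(B(x_0,d))=\mu(B(x_0,d_0))$ is constant there.

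Given that, the plan is: choose $d_1\in(d_0,\min\{a,d_0+\varepsilon\})$ — this is possible precisely because $d_0<a$. Then $\mu(B(x_0,d_1))=\mu(B(x_0,d_0))$, while $(2d_1)^s>(2d_0)^s$ since $s>0$, so
\[
h(x_0,d_1)=\frac{(2d_1)^s}{\mu(B(x_0,d_1))}>\frac{(2d_0)^s}{\mu(B(x_0,d_0))}=h(x_0,d_0)=P^s(E).
\]
It remains to check that $(x_0,d_1)$ is an admissible competing pair, i.e.\ that $d_1\le a$ — which holds by construction — so that by Theorem~\ref{keylema} (formula (\ref{packssc})) we have $h(x_0,d_1)\le P^s(E)$, contradicting the strict inequality above. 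This contradiction forces $d_0=a$ or $\partial B(x_0,d_0)\cap E\ne\emptyset$, which is exactly (\ref{nonemptybdd}).

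**The main obstacle.** The delicate point is justifying that $\mu(B(x_0,d))$ is genuinely locally constant near $d_0$ under the hypothesis $\partial B(x_0,d_0)\cap E=\emptyset$, rather than merely right-continuous. The argument is: the function $d\mapsto B(x_0,d)\cap E$ is non-decreasing in $d$, and a point $y\in E$ enters the family $\{B(x_0,d)\cap E\}$ exactly at $d=\mathrm{dist}(x_0,y)$; if no $y\in E$ has $\mathrm{dist}(x_0,y)\in(d_0-\varepsilon,d_0+\varepsilon)$ — which is the meaning of all those spheres missing $E$, combined with $\partial B(x_0,d_0)\cap E=\emptyset$ — then $B(x_0,d)\cap E$ is the \emph{same set} for all such $d$, so its $\mu$-measure is constant. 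One should also note the edge case $d_0=ar_{\min}$ causes no trouble: we only need to move $d$ \emph{upward} to $d_1$, staying inside $[ar_{\min},a]$ and inside $(d_0-\varepsilon,d_0+\varepsilon)$, which is fine as long as $d_0<a$. So no separate treatment of the left endpoint is needed.
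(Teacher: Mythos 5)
Your proof is correct and follows essentially the same route as the paper: since $\partial B(x_{0},d_{0})\cap E=\emptyset$ and $d_{0}<a$, one can enlarge the radius to some $d_{1}\in(d_{0},a]$ without picking up any $\mu$-mass, so $h(x_{0},d_{1})>h(x_{0},d_{0})=P^{s}(E)$, contradicting the maximality in Theorem~\ref{keylema}. Your justification that $\mu(B(x_{0},d))$ is locally constant near $d_{0}$ (via compactness of $E$ and the closedness of the distance set $\{dist(x_{0},y):y\in E\}$) simply makes explicit what the paper leaves implicit in asserting the existence of $d^{\prime}$.
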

\begin{proof}
Let $(x_{0},d_{0})\in E\times \lbrack ar_{\min },a]$ satisfying
(\ref{minpair}) and let $\mathbf{i=}i_{1}i_{2}...\in \mathbb{M}$
be such that $\pi (\mathbf{i})=x_{0}$. Suppose on the contrary
that $\partial B(x_{0},d_{0})\cap E=\emptyset $ and $d_{0}<a$.
Then, there exists $d^{\prime }>d_{0}$ such that $ar_{\min }\leq
d^{\prime }\leq a$ and
\begin{equation*}
(B(x_{0},d^{\prime })\setminus B(x_{0},d_{0}))\cap E=\emptyset .
\end{equation*}
Whence, $\mu (B(x_{0},d_{0}))=\mu (B(x_{0},d^{\prime }))$
contradicting the maximality of $(x_{0},d_{0})$ (see
(\ref{packssc2})).
\end{proof}
\begin{remark}
Observe that the case $d_{0}=a$ in Lemma~\ref{attainE} might be omitted,
this is because (\ref{replace 2}) implies that for any $x\in E$,
\begin{equation*}
h(x,a)=h(f_{i}(x),ar_{\min })
\end{equation*}
where $i\in M$ is such that $r_{\min }=r_{i}$.
\end{remark}
Two straightforward consequences of the above remark are the following
corollaries to Theorem~\ref{keylema} and Lemma~\ref{attainE}, respectively.
\begin{corollary}
\label{maxallgo}Under the conditions of Theorem~\ref{keylema},
\begin{equation*}
P^{s}(E)=\max \{h(x,d):x\in E\text{ \ \ and \ \ }d\in \lbrack
ar_{\min},a)\}.
\end{equation*}
\end{corollary}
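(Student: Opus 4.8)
The plan is to derive Corollary~\ref{maxallgo} directly from Theorem~\ref{keylema}, specifically from the intermediate equality \eqref{packssc2}, by using the preceding remark to eliminate the endpoint $d=a$ from the compact interval $[ar_{\min},a]$. Starting from \eqref{packssc2} we have
\begin{equation*}
P^{s}(E)=\max\{h(x,d):x\in E,\ ar_{\min}\leq d\leq a\},
\end{equation*}
so it suffices to show that the supremum over the half-open range $[ar_{\min},a)$ already equals the maximum over the closed range $[ar_{\min},a]$, and that this supremum is attained.

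First I would argue that replacing the closed interval by the half-open one does not decrease the value: by the remark, for every $x\in E$ we have $h(x,a)=h(f_{i}(x),ar_{\min})$ with $r_{i}=r_{\min}$, and since $f_{i}(x)\in E$ and $ar_{\min}\in[ar_{\min},a)$, every value $h(x,a)$ attained at the right endpoint is also attained at an interior point. Hence
\begin{equation*}
\sup\{h(x,d):x\in E,\ d\in[ar_{\min},a)\}=\max\{h(x,d):x\in E,\ d\in[ar_{\min},a]\}=P^{s}(E).
\end{equation*}

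Next I would explain why the $\sup$ over the half-open set is actually a $\max$. As noted at the end of the proof of Theorem~\ref{keylema}, $h$ is continuous on $E\times\mathbb{R}^{+}$ (the $\mu$-measure of the boundary of any ball is zero), so the maximum of $h$ over the compact set $E\times[ar_{\min},a]$ is attained at some pair $(x_{0},d_{0})$ with $h(x_{0},d_{0})=P^{s}(E)$. If $d_{0}<a$ we are done. If $d_{0}=a$, apply the identity $h(x_{0},a)=h(f_{i}(x_{0}),ar_{\min})$ from the remark: the pair $(f_{i}(x_{0}),ar_{\min})$ lies in $E\times[ar_{\min},a)$ and realizes the same value $P^{s}(E)$. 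Either way the maximum over $E\times[ar_{\min},a)$ is attained and equals $P^{s}(E)$, which is the claimed formula.

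I do not expect a genuine obstacle here: the corollary is essentially a bookkeeping consequence of Theorem~\ref{keylema} together with the scaling identity \eqref{replace 2} used once at the minimal-ratio map. The only point requiring a little care is making sure that the attainment argument covers the boundary case $d_{0}=a$ — this is exactly what the remark supplies, by transporting an optimal pair with $d_{0}=a$ to one with radius $ar_{\min}<a$ via $f_{i}$. Everything else (continuity of $h$, compactness of $\mathcal{A}(ar_{\min},a)$) has already been recorded in the proof of Theorem~\ref{keylema}.
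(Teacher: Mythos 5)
Your argument is correct and is exactly the route the paper intends: the corollary is stated there as a direct consequence of the remark that $h(x,a)=h(f_{i}(x),ar_{\min })$ for $r_{i}=r_{\min }$, which transports any maximizer at the endpoint $d=a$ to one with radius $ar_{\min }<a$, combined with the attainment of the maximum already established in Theorem~\ref{keylema}. No substantive difference from the paper's (implicit) proof.
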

\begin{corollary}
\label{newattain}For any $a\in (0,\frac{c}{r_{\min }}]$ there
exists $(x_{0},d_{0})\in E\times \lbrack ar_{\min },a)$ such that
\begin{equation*}
P^{s}(E)=h(x_{0},d_{0})
\end{equation*}
and
\begin{equation*}
\partial B(x_{0},d_{0})\cap E\neq \emptyset.
\end{equation*}
\end{corollary}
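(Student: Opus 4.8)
The plan is to deduce Corollary~\ref{newattain} directly from Lemma~\ref{attainE} together with the Remark immediately preceding it, which shows how to trade a ball of radius $a$ for one of radius $ar_{\min}$ sitting inside a smaller cylinder. First I would invoke Corollary~\ref{maxallgo} (equivalently, the combination of Theorem~\ref{keylema} and the displayed identity $h(x,a)=h(f_i(x),ar_{\min})$ in the Remark): the maximum defining $P^{s}(E)$ is attained on the half-open parameter range $[ar_{\min},a)$, so there is a pair $(x_{0},d_{0})\in E\times[ar_{\min},a)$ with $P^{s}(E)=h(x_{0},d_{0})$. This already gives the first assertion of the corollary with $d_{0}<a$ built in.

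Next I would feed this pair into Lemma~\ref{attainE}. Since $d_{0}<a$, the dichotomy in that lemma forces the second alternative, namely $\partial B(x_{0},d_{0})\cap E\neq\emptyset$. That is precisely the boundary condition claimed. So the two displayed conclusions of Corollary~\ref{newattain} hold simultaneously for this same $(x_{0},d_{0})$, and the proof is essentially a two-line chain of implications: Corollary~\ref{maxallgo} produces an optimal pair with $d_{0}\in[ar_{\min},a)$, and Lemma~\ref{attainE} upgrades it to one whose defining ball has boundary meeting $E$.

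There is no real obstacle here; the only point requiring a moment's care is making sure the pair handed over to Lemma~\ref{attainE} genuinely has $d_{0}\neq a$ so that the trivial branch of the lemma's conclusion is excluded — but this is exactly what the preceding Remark arranges, by replacing any optimal ball of radius $a$ centered at $x$ with the optimal ball of radius $ar_{\min}$ centered at $f_{i}(x)$, where $r_{i}=r_{\min}$, using \eqref{replace 2}. If one preferred to avoid citing Corollary~\ref{maxallgo} I would instead start from Theorem~\ref{keylema}, take any maximizing pair in $E\times[ar_{\min},a]$, and if its radius equals $a$ apply the Remark's substitution once to bring it into $[ar_{\min},a)$ without changing the value of $h$; either route works and the whole argument is a short corollary-style deduction.
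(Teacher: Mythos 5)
Your proposal is correct and matches the paper's intended argument: the paper presents Corollary~\ref{newattain} as a straightforward consequence of the Remark following Lemma~\ref{attainE} (which uses (\ref{replace 2}) to replace any maximizer of radius $a$ by one of radius $ar_{\min}<a$), combined with the dichotomy of Lemma~\ref{attainE} to force $\partial B(x_{0},d_{0})\cap E\neq\emptyset$. Nothing further is needed.
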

It is useful to note that, for any pair $(x_{0},d_{0})\in E \times
\lbrack ar_{\min },a)$ satisfying (\ref{minpair}),
Corollary~\ref{newattain} guarantees the existence of a point
$y\in E$ such that
\begin{equation}
P^{s}(E)=\frac{(2d_{0})^{s}}{\mu
(B(x_{0},d_{0}))}=\frac{(2dist(x_{0},y))^{s}}{
\mu(B(x_{0},dist(x_{0},y))}.  \label{atta}
\end{equation}
\section{\protect\bigskip Description of the algorithm\label{description}}
This section is devoted to describe an algorithm to compute the
packing measure of self-similar sets satisfying the SSC. We recall
that, for this particular class of self-similar sets, the packing
measure can be defined as
\begin{equation}
P^{s}(E)=\max \left\{ h(x,d):x\in E,ar_{\min }\leq d\leq a\right\}
\label{formulapacking}
\end{equation}%
where $a$ is any real number within the interval $(0,\frac{c}{r_{\min }}]$
(see Theorem~\ref{keylema}).
Our method is strongly based on (\ref{formulapacking}) as, to find the value
of $P^{s}(E)$, we construct an algorithm for maximizing the value of
\begin{equation}
h(x,d)=\frac{(2d)^{s}}{\mu (B(x,d))}  \label{densidadcont}
\end{equation}
when $x\in E$\ and $\tilde{c}\leq d\leq \frac{\tilde{c}}{r_{\min
}}$, where $\tilde{c}$ is an estimate of $c$ . The idea is to \
estimate $c$ with $\tilde{c}$ and construct sequences $ \left\{
A_{k}\right\} $\ of finite sets and $\{\mu _{k}\}$ of discrete
measures supported on $A_{k}$ such that $\overline{\cup
_{k=1}^{\infty }A_{k}}=E$ and $\{\mu _{k}\}$ converges weakly to
$\mu $. $\overline{A}$ stands for the closure of $A$. This allows
us to construct another\ sequence $ \left\{ \tilde{m}_{k}\right\}
$ converging to $P^{s}(E)$ by choosing on each step $k$, a pair
$(\tilde{x}_{k},\tilde{y}_{k})$ $\in $ $A_{k}\times A_{k}$
satisfying
\begin{gather}
\tilde{m}_{k}:=h_{k}(\tilde{x}_{k},dist(\tilde{x}_{k},\tilde{y}_{k}))=
\label{mktildedef} \\
=\max \{h_{k}(x,dist(x,y)):(x,y)\in A_{k}\times A_{k}\text{ and }\tilde{c}%
\leq dist(x,y)\leq \frac{\tilde{c}}{r_{\min }}\},  \notag
\end{gather}%
where $h_{k}(x,d):=\frac{(2d)^{s}}{\mu _{k}(B^{\prime }(x,d))}$.
\begin{remark}
Notice that the definition of the discrete density function $h_{k}(x,r)$\
uses open balls rather than\ the closed balls used in the continuous version
$h(x,d)$. Actually, since $\mu (\partial B(x,d))=0$\ (see (\cite{[MT]})), it
is also possible to use open balls in (\ref{densidadcont}). The situation
with $h_{k}(x,d)$\ is slightly different. Either the use of open or closed
balls in the definition of $h_{k}(x,d)$\ leads to a convergent algorithm,
but the numerics have proved that for the packing measure is more convenient
to use open balls while, in the centered Hausdorff measure case, closed
balls were more adequate. The difference between these two cases relies on
the nature of the candidates to optimal balls: in the first case they have
to be as emptiest as possible while in the present case, the fuller the
better.
\end{remark}
\subsection{Homogeneous case $(r_{i}=r$ $\forall i\in M)$}
Next, we describe the algorithm for self-similar sets where all the
contraction ratios coincide as this case illustrates better the central idea
of the construction. Afterwards, we shall explain in Section~\ref{mod} the
modifications needed to treat the case of unequal similarity ratios. Observe
that if $r_{i}=r_{j}:=r$ $\forall $ $i\neq j$, the \textit{invariant measure
}$\mu $ satisfies that%
\begin{equation}
\mu (E_{\mathbf{i}_{k}})=r^{ks}=\frac{1}{m^{k}}\text{\quad }\forall \text{ }%
\mathbf{i}_{k}\in M^{k}.  \label{mucilig}
\end{equation}
\begin{algorithm}
\label{case1}(Homogeneous case: $r_{i}=r_{j}:=r$ \ $\forall i\neq
j,$ $ i,j\in M$) \emph{Input of the Algorithm}: System of
contracting similitudes,$\ k_{\max }$\ (the number of iterations),
$\tilde{k}$ and $N\geq 2$ (see step 3).
\begin{enumerate}
\item \textbf{Construction of }$\mathbf{A}_{k}$. Let \ $A_{1}=\left\{
x_{1},x_{2},...,x_{m}\right\} $ be the set of the fixed points for the
similitudes in $\Psi $, that is, for every $i\in M$, $f_{i}(x_{i})=x_{i}$.
For $k\in \mathbb{N}^{+}$, let $A_{k}=S\Psi (A_{k-1})$ be the set of $m^{k}$
points obtained by applying $S\Psi (x)=\bigcup\limits_{i\in M}f_{i}(x)$ to
each of the $m^{k-1}$ points of $A_{k-1}$.
\begin{notation}
\label{not2} For every $x\in A_{k}$ we shall denote by
$\mathbf{i}_{k}(x)=i_{1}(x)....i_{k}(x)\in M^{k}$ the unique
sequence of length $k$ such that $x=f_{\mathbf{i}_{k}(x)}(y)$ for
some $y\in A_{1}$ and we shall write $x$ as
$x_{\mathbf{i}_{k}(x)}$. Note that, in this case,
$y=f_{i_{k}(x)}(y)$ and that $x\in A_{k}\setminus A_{k-1}\iff
i_{k}(x)\neq i_{k-1}(x)$.
\end{notation}
\item \textbf{List of distances.} This step consists on computing
the set
\begin{equation*}
\Delta _{k}:=\{dist(x,y):(x,y)\in A_{k}\times A_{k}\text{ }\}
\end{equation*}
of distances between the pairs of points in $A_{k}\times A_{k}$.
It is important to notice that $\Delta _{k-1}\subset \Delta _{k}$
since $ A_{k-1}\subset A_{k}$ (see Lemma~\ref{keylemaconv}~(ii)).
It is then clear that, in order to construct $\Delta _{k}$, there
is not need to compute again the distances already computed in
$\Delta _{k-1}$, so we calculate only the distances $dist(x,y)$
between those points $(x,y)\in A_{k}\times A_{k}$ satisfying that
\begin{equation*}
i_{k}(x)\neq i_{k-1}(x)\text{ or }i_{k}(y)\neq i_{k-1}(y)
\end{equation*}
(see Notation~\ref{not2}). For every $k\in \mathbb{N}^{+}$, let us
denote by $\Delta _{k}^{0}$ the set of all these distances and set
$\Delta _{k}=\Delta _{k}^{0}\cup \Delta _{k-1}$, with $\Delta
_{1}=\Delta _{1}^{0}=\{dist(x,y):(x,y)\in A_{1}\times A_{1}\}$.
Observe that
\begin{equation*}
\Delta _{k}^{0}:=\{dist(x,y):x,y\in A_{k}\}\setminus \Delta
_{k-1}.
\end{equation*}
From now on we assign the code $(\mathbf{i}_{k}(x),$
$\mathbf{i}_{k}(y))$ to each $dist(x,y)\in \Delta _{k}$ and refer
to $(\mathbf{i}_{k}(x),$ $\mathbf{i}_{k}(y))$ as the\emph{\
}$k-$\emph{address of }$dist(x,y)$. Notice that if $dist(x,y)\in
\Delta _{k}$ , its $(k+1)-$\emph{address} will be
\begin{equation*}
(\mathbf{i}_{k}(x)i_{k},\mathbf{i}_{k}(y)i_{k}(y))=(i_{1}(x)....i_{k}(x)i_{k}(x),i_{1}(y)....i_{k}(y)i_{k}(y)).
\end{equation*}
\item \textbf{Estimation of }$\mathbf{c}$ \textbf{by}
$\mathbf{\tilde{c}}$. Let $c_{0}:=\min \{dist(x,y)\in
\Delta_{1}:x\neq y\}$. For every $k\in  \mathbb{N}^{+}$,
\begin{equation}
\tilde{c}_{k}:=c_{k}-2r^{k}  \label{cktilde0}
\end{equation}%
where%
\begin{equation*}
c_{k}:=\min \{c_{k-1},c_{k}^{\prime }\}
\end{equation*}%
and
\begin{equation*}
c_{k}^{\prime }=\min \{dist(x,y)\in \Delta _{k}^{0}:i_{1}(x)\neq i_{1}(y)\}.
\end{equation*}%
Notice that
\begin{equation*}
\tilde{c}_{k}=\min_{i,j\in M}\{dist(f_{i}(A_{k-1}),f_{j}(A_{k-1})):i\neq
j,\}-2r^{k}.
\end{equation*}%
We define
\begin{equation}
\tilde{c}:=\tilde{c}_{\tilde{k}}  \label{ctilde}
\end{equation}%
where $\tilde{k}$ is the biggest $k\in
\mathbb{N}
^{+}$ allowed by the computer capacity such that $\tilde{c}_{k}>0$.
\begin{remark}
In many examples and, in particular, when all the similitudes in $\Psi $ are
homotheties, the above approximation of $c$\ by $\tilde{c}$\ is not needed
because the minimal distance between basic cylinder sets is known. In these
cases, the present step should be replaced with the value of $c$ in the
construction of the algorithm. It is easy to see that Theorem~\ref{conv}
also holds if  $c$\ is replaced with $\tilde{c}$ in the construction of the
sequence $\{\tilde{m}_{k}\}$.
\end{remark}
\item \textbf{Construction of }$\mathbf{\mu }_{k}$. For all $k\in \mathbb{N}%
^{+}$, set
\begin{equation}
\mu _{k}(x)=\frac{1}{m^{k}}\text{\quad }\forall x\in A_{k.}
\label{muigulapto}
\end{equation}%
Thus,
\begin{equation*}
\mu _{k}=\frac{1}{m^{k}}\sum_{i=1}^{m^{k}}\delta _{x_{i}}
\end{equation*}%
is a probability measure with $spt(\mu _{k})=A_{k}=\{x_{1},...,x_{m^{k}}\}$.
\item \textbf{Construction of }$\mathbf{\tilde{m}}_{k}$
Given $x\in A_{k}$ :
\begin{itemize}
\item[5.1] Arrange in increasing order the distances $d\in \Delta _{k}$
containing $\mathbf{i}_{k}(x)$ in their addresses and such that $d\leq \frac{%
\tilde{c}}{r}$.
\item[5.2] Let $L(x):=\{d_{0},d_{1},....,d_{p(x)}\}$\ with $0\leq p(x)\leq
m^{k}-1$, be the list of ordered distances. For each $j=0,...,p(x)$, $%
B^{\prime }(x,d_{j})$\ contains $j-t(j)$\ points of $\ A_{k}$, where $%
t(j)=\#\{l<j:d_{l}=d_{j}\}$, therefore
\begin{equation}
\mu _{k}(B{\acute{}}(x,d_{j}))=\frac{j-t(j)}{m^{k}}.
\label{mukigualball}
\end{equation}
Compute
\begin{equation}
h_{k}(x,d_{j}):=\frac{(2d_{j})^{s}}{\mu
_{k}(B{\acute{}}(x,d_{j}))}=\frac{(2d_{j})^{s}}{\frac{j-t(j)}{m^{k}}}=\frac{m^{k}\left(
2d_{j}\right) ^{s}}{j-t(j)}  \label{fkigual}
\end{equation}
only for those distances $d_{j}\in L(x)$\ satisfying that
\begin{equation*}
\tilde{c}\leq d_{j}\text{.}
\end{equation*}
\item[5.3] Find the maximum, $m_{k}(x)$, of the values computed in the step
5.2.
\item[5.4] Repeat steps 5.1-5.3 for each $x\in A_{k}.$
\item[5.5] Take the maximum,
\begin{equation*}
\tilde{m}_{k}:=\max \{m_{k}(x):x\in A_{k}\}
\end{equation*}
of the $m^{k}$ values computed in step 5.4.
\end{itemize}
\item If $k=k_{\max }$\ end the program. If $k<k_{\max }$\ let
$k=k+1$\ and go to step~1.
\end{enumerate}
\end{algorithm}
We recall again the importance from the computational point of view of
reducing the set of balls where the supremum is to be computed. Moreover,
these balls should be as large as possible. This is the reason to build the
algorithm upon the formula
\begin{equation}
P^{s}(E)=\max \left\{ h(x,d):(x,d)\in E\times \lbrack
c,\frac{c}{r_{\min }} ]\right\} \text{.}  \label{packssc3}
\end{equation}%
Notice that, for every $x\in A_{k}$, the algorithm finds the maximum value
of $h_{k}(x,d_{j})$ only for those pairs $(x,d_{j})\in A_{k}\times L(x)$
such that
\begin{equation}
d_{j}\in \lbrack \tilde{c},\frac{\tilde{c}}{r_{\min
}}]=[\tilde{c},\frac{\tilde{c}}{r}].  \label{condition}
\end{equation}
\subsection{General case $(r_{i}\neq r_{j})$\label{mod}}
Next we list the changes needed to build the algorithm when the contraction
ratios are unequal. The main difference with the previous case is the value
assigned to the measures $\mu _{k}$. The structure of the algorithm is the
same in either case.
\begin{enumerate}
\item In step 3, replace (\ref{cktilde0}) with
\begin{equation}
\tilde{c}_{k}:=c_{k}-2r_{\max }^{k}.  \label{ctildege}
\end{equation}
\item In step 4, replace (\ref{muigulapto}) with
\begin{equation}
\mu _{k}(x)=r_{\mathbf{i}_{k}(x)}^{s}\text{\quad }\forall x\in A_{k},
\label{mukdif}
\end{equation}
(see Notation~\ref{not2}). Consequently, $\mu _{k}$ is a probability measure
with $spt(\mu _{k})=A_{k}=\{x_{1},...,x_{m^{k}}\}$ given by
\begin{equation}
\mu _{k}=\sum_{j=1}^{m^{k}}r_{\mathbf{i}_{k}(x_{j})}^{s}\delta
_{x_{j}} . \label{sumdeltasdif}
\end{equation}
\item In step 5.1 write $d\leq \frac{\tilde{c}}{r_{\min }}$
instead of $ d\leq \frac{\tilde{c}}{r}$. \item In step 5.2,
replace (\ref{mukigualball}) with
\begin{equation}
\mu _{k}(B^{\prime
}(x,d_{j}))=\sum_{q=0}^{j-t(j)-1}r_{\mathbf{i}_{k}(x_{i_{q}})}^{s},
\label{mudifball}
\end{equation}
where $x_{i_{q}}\in A_{k}$\ is such that
$d_{q}=dist(x,x_{i_{q}})<d_{j}$\ for all $q=0,...,j-t(j)-1$.
Observe that $\mu _{k}(B^{\prime }(x,d_{0}))=0$\ as $x=x_{i_{0}}$
in every case. Replace also (\ref{fkigual}) with
\begin{equation}
h_{k}(x,d_{j}):=\frac{(2d_{j})^{s}}{\mu _{k}(B^{\prime
}(x,d_{j}))}=\frac{(2d_{j})^{s}}{\sum_{q=0}^{j-t(j)-1}r_{\mathbf{i}_{k}(x_{i_{q}})}^{s}}.
\label{fkdif}
\end{equation}
Observe that the last equality in (\ref{condition}) does not hold in this
general case.
\end{enumerate}
\begin{remark}
\label{relations}Since $|E|=1$ and for every $k\in
\mathbb{N}^{+}$, $E\subset (A_{k})_{r_{\max
}^{k}}=\{x\in\mathbb{R}^{n}:dist(x,A_{k})\leq r_{\max }^{k}\}$, we
have that
\begin{equation*}
c_{k}-2r_{\max }^{k}\leq c
\end{equation*}
and thus
\begin{equation}
c-2r_{\max }^{\tilde{k}}\leq \tilde{c}\leq c  \label{ckbound}
\end{equation}
(see (\ref{ctilde}) for notation). Hence, by
(\ref{formulapacking})
\begin{equation}
P^{s}(E)=\max \{h(x,d):(x,d)\in E\times \lbrack
\tilde{c},\frac{\tilde{c}}{ r_{\min }}]\}.  \label{maxalg}
\end{equation}
\end{remark}
\begin{notation}
For the rest of the paper we shall keep the following notation.
Let $ A_{k}=S\Psi (A_{k-1})$ be the set of $m^{k}$\ points
obtained after $k$ iterations with $A_{1}=\{x\in
\mathbb{R}^{n}:f_{i}(x)=x,i=1,...,m\}$, we write
\begin{equation*}
A:=\cup _{k=1}^{\infty }A_{k}.
\end{equation*}
Given $k\in \mathbb{N}^{+}$ and $x\in A_{k}$, let $D_{k}^{x}$ be the set of
distances satisfying condition (\ref{condition}), we denote by
\begin{equation*}
D_{k}:=\cup _{x\in A_{k}}D_{k}^{x}
\end{equation*}%
and we write
\begin{equation*}
D:=\cup _{k=0}^{\infty }D_{k}.
\end{equation*}
Observe that (\ref{condition}) implies that $D_{k}$ and $D$ only
take values in the interval $[\tilde{c},\frac{\tilde{c}}{r_{\min
}}]$. Under this notation the sequence $\tilde{m}_{k}$ computed in
step~5.5 can be written as
\begin{equation*}
\tilde{m}_{k}:=h_{k}(\tilde{x}_{k},\tilde{d}_{k}):=\frac{(2\tilde{d}_{k})^{s}}{\mu
_{k}(B^{\prime }(\tilde{x}_{k},\tilde{d}_{k}))}=\max
\{h_{k}(x,d):(x,d)\in A_{k}\times D_{k}\}.
\end{equation*}
We shall refer to the sequence
$\{(\tilde{x}_{k},\tilde{d}_{k})\}_{k=1}^{\infty }$ as the
(optimal) \emph{algorithm sequence}.
\end{notation}
\section{Convergence of the algorithm\label{convergsec}}
In this section \ we show the convergence of\ the \emph{algorithm
sequence} $\{\tilde{m}_{k}\}$. This is stated in the following
theorem.
\begin{theorem}
\label{conv} The \emph{algorithm sequence
}$\{\tilde{m}_{k}\}_{k\in \mathbb{N}^{+}}$ given by (\ref{seal})
converges to $P^{s}(E)$. Moreover, for any $k\in \mathbb{N}^{+}$
such that $\ \mu _{k}(B^{\prime
}(\widetilde{x}_{k},\widetilde{d}_{k}))=\mu
(B(\widetilde{x}_{k},\widetilde{d}_{k}))$,
\begin{equation}
P^{s}(E)\geq \tilde{m}_{k}.  \label{lowerbound}
\end{equation}
\end{theorem}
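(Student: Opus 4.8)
The plan is to show the two inequalities $\limsup_k \tilde m_k \le P^s(E)$ and $\liminf_k \tilde m_k \ge P^s(E)$ separately, using the weak convergence $\mu_k \rightharpoonup \mu$ together with the compactness of the search region $\mathcal A(\tilde c, \tilde c/r_{\min})$ established in the proof of Theorem~\ref{keylema}. First I would record the basic structural facts about the approximating data: $\overline{\cup_k A_k} = E$, $A_{k-1}\subset A_k$, $\Delta_{k-1}\subset\Delta_k$, and that each $x\in E$ is approximated by some $x_k\in A_k$ with $dist(x,x_k)\le r_{\max}^k$; dually, every point of $A_k$ lies within $r_{\max}^k$ of $E$. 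These are exactly the facts needed to transfer density estimates back and forth between the discrete model $(A_k,\mu_k)$ and the continuous object $(E,\mu)$.

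\textbf{Upper bound.} For $\limsup_k \tilde m_k \le P^s(E)$, take the optimal algorithm pair $(\tilde x_k,\tilde d_k)\in A_k\times D_k$ realizing $\tilde m_k$. By compactness of $E\times[\tilde c,\tilde c/r_{\min}]$, pass to a subsequence along which $\tilde x_k\to x_\infty\in E$ and $\tilde d_k\to d_\infty\in[\tilde c,\tilde c/r_{\min}]$. The key point is a lower semicontinuity estimate for the denominator: because $A_k$ is an $r_{\max}^k$-net of $E$ and $\mu_k$ is the natural discretization, any open ball $B^{\prime}(x_\infty,d_\infty-\varepsilon)$ is, for $k$ large, "filled" by at least the $\mu_k$-mass that $\mu$ assigns to a slightly smaller ball — more precisely one shows $\liminf_k \mu_k(B^{\prime}(\tilde x_k,\tilde d_k)) \ge \mu(B^{\prime}(x_\infty,d_\infty))$, using $\mu(\partial B(x_\infty,d_\infty))=0$ to upgrade this to $\mu(B(x_\infty,d_\infty))$. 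Since the numerator $(2\tilde d_k)^s\to(2d_\infty)^s$ converges, we get $\limsup_k \tilde m_k \le (2d_\infty)^s/\mu(B(x_\infty,d_\infty)) = h(x_\infty,d_\infty) \le P^s(E)$ by (\ref{maxalg}).

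\textbf{Lower bound.} For $\liminf_k \tilde m_k \ge P^s(E)$, invoke Corollary~\ref{newattain} to fix $(x_0,d_0)\in E\times[ar_{\min},a)$ with $P^s(E)=h(x_0,d_0)$ and $\partial B(x_0,d_0)\cap E\ne\emptyset$; choose $a$ so that this interval is $[\tilde c,\tilde c/r_{\min}]$. Approximate $x_0$ by $x_0^{(k)}\in A_k$ and, using the witness point $y\in E$ from (\ref{atta}) with $dist(x_0,y)=d_0$ together with its approximant in $A_k$, produce a distance $d_0^{(k)}\in D_k^{x_0^{(k)}}$ with $d_0^{(k)}\to d_0$; one must check that these candidate pairs actually satisfy condition (\ref{condition}), which is where (\ref{ckbound}) — i.e. $c-2r_{\max}^{\tilde k}\le\tilde c\le c$ — and the slack in $d_0<a$ are used. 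Then $\tilde m_k \ge h_k(x_0^{(k)},d_0^{(k)})$, and an upper semicontinuity estimate $\limsup_k \mu_k(B^{\prime}(x_0^{(k)},d_0^{(k)})) \le \mu(\overline{B^{\prime}(x_0,d_0)}) = \mu(B(x_0,d_0))$, again via the null-boundary property and the fact that a point of $A_k$ sits within $r_{\max}^k$ of $E$, yields $\liminf_k \tilde m_k \ge (2d_0)^s/\mu(B(x_0,d_0)) = P^s(E)$. Combining the two bounds gives $\tilde m_k\to P^s(E)$. Finally, (\ref{lowerbound}) is immediate: if $\mu_k(B^{\prime}(\tilde x_k,\tilde d_k)) = \mu(B(\tilde x_k,\tilde d_k))$ then $\tilde m_k = h(\tilde x_k,\tilde d_k)$ exactly, and this is $\le P^s(E)$ by (\ref{maxalg}).

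\textbf{Main obstacle.} The delicate part is the two-sided control of the discrete masses $\mu_k(B^{\prime}(\cdot,\cdot))$ against $\mu(B(\cdot,\cdot))$ along the moving centers and radii. The radii $\tilde d_k$ are confined to a fixed interval bounded away from $0$, which is precisely what makes uniform estimates possible — but one cannot restrict to balls meeting two cylinders (unlike the centered Hausdorff case), so the counting of $A_k$-points inside a ball must be handled directly via the net property and the cylinder structure, and the boundary-null property $\mu(\partial B)=0$ must be leveraged carefully to close the gap between open and closed balls and between $\tilde d_k$ and its limit. I expect some of this to rely on lemmas imported from \cite{[LLM3]} (referenced as $\{A_k\}$, $\{\mu_k\}$ behaviour and weak convergence) plus the new lemmas promised at the start of this section; the genuinely new work is verifying that the algorithm's interval constraint (\ref{condition}), built on the estimate $\tilde c$ rather than $c$, does not exclude near-optimal pairs, which is exactly what Remark~\ref{relations} is set up to guarantee.
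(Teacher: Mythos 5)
Your overall architecture matches the paper's: compactness of $E\times[\tilde c,\tilde c/r_{\min}]$, two one-sided inequalities, control of $|\mu_k(B^{\prime}(x_k,d_k))-\mu(B(x_k,d_k))|$ along moving centres and radii (the paper's Lemma~\ref{convstep} and Corollary~\ref{corolconv}), Corollary~\ref{newattain} to produce an optimal pair for the lower bound, and the observation that (\ref{lowerbound}) follows directly from (\ref{maxalg}). The paper packages the lower bound as Lemma~\ref{gooddk} plus a contradiction with the maximality (\ref{seal}), and the upper bound via Corollary~\ref{corolconv}, rather than your direct semicontinuity estimates, but these are presentational differences.

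There is, however, one genuine gap, and it sits exactly at the point the paper identifies as the structural difficulty of the packing case. In your lower bound you approximate the optimal pair $(x_0,d_0)$ by $(x_0^{(k)},d_0^{(k)})$ with $x_0^{(k)}\in A_k$ and $d_0^{(k)}=dist(x_0^{(k)},y^{(k)})\to d_0$, and you assert that admissibility, i.e.\ condition (\ref{condition}), follows from (\ref{ckbound}) ``and the slack in $d_0<a$.'' That slack only protects the \emph{upper} endpoint: since $d_0<\tilde c/r_{\min}$, the approximants are eventually below $\tilde c/r_{\min}$. It does nothing at the \emph{lower} endpoint. Corollary~\ref{newattain} permits $d_0=\tilde c$ exactly, and in that case the distances $d_0^{(k)}\to\tilde c$ may fall below $\tilde c$ infinitely often, so the pairs $(x_0^{(k)},d_0^{(k)})$ are excluded from $A_k\times D_k$ and the inequality $\tilde m_k\geq h_k(x_0^{(k)},d_0^{(k)})$ is not available. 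The paper's Lemma~\ref{gooddk} resolves this with a specific device your sketch lacks: when $d_0=\tilde c\leq c$ one has $B^{\prime}(x_0,d_0)\cap E\subset E_{i_1}$, so by Lemma~\ref{bolatras}~(i) the density is invariant under $f_{i_1}^{-1}$; the subsequence of approximants with $d_0^{(k)}<\tilde c$ is then replaced by $(f_{i_1}^{-1}(x_0^{(k)}),f_{i_1}^{-1}(y^{(k)}))$, which still lie in $A_k$ by the nestedness $A_k\subset A_{k+1}$, whose mutual distances $d_0^{(k)}/r_{i_1}$ converge to $\tilde c/r_{i_1}\in(\tilde c,\tilde c/r_{\min}]$ from below, and whose discrete densities still converge to $P^{s}(E)$ by Theorem~\ref{main}. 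Without this pull-back step (or an argument ruling out $d_0=\tilde c$, which the paper does not supply and which does not follow from Corollary~\ref{newattain}), the lower bound is incomplete.
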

The proof of Theorem~\ref{conv} is postponed to the end of the
section. It will follow the structure of the proof given in
\cite{[LLM2]}, where equivalent results were obtained for the
centered Hausdorff measure. However, the case of the packing
measure is structurally more difficult. On one hand, the value of
the centered Hausdorff measure was found as the maximum of the
densities of balls intersecting at least two different basic
cylinder sets. This is not the case of the packing measure since
the balls upon which we seek the minimum value have radii bounded
between $\tilde{c}$ and $\frac{\tilde{c}}{r_{\min
}}<\frac{c}{r_{\min }}$. This means that we cannot restrict to
balls touching two different basic cylinder sets since the radii
these balls might be bigger than $\frac{c}{r_{\min }}$ if some of
the contractio factors of the similarities are bigger than $c$. A
preliminary step will be to show the existence of a sequence
$\{(x_{k},d_{k})\}_{k=1}^{\infty }$ in $A_{k}\times $
$[\tilde{c},\frac{ \tilde{c}}{r_{\min }}]$ such that
\begin{equation*}
m_{k}:=h_{k}(x_{k},d_{k}):=\frac{(2d_{k})^{s}}{\mu _{k}(B^{\prime
}(x_{k},d_{k}))}\rightarrow P^{s}(E).
\end{equation*}
This is done in Lemma~\ref{gooddk}. It is important to notice
that, although Lemma~\ref{gooddk} \ guarantees the existence of a
nice sequence $\left\{ m_{k}\right\} $ converging to $P^{s}(E)$,
there is a priory no reason for this sequence to coincide with the
\emph{algorithm sequence} $\{\tilde{m}_{k}\}$. The algorithm
selects its own sequence $\{\tilde{m}_{k}\}_{k=1}^{\infty }$ by
choosing on each step $k$, those pairs $(\tilde{x}_{k},\tilde{d}_{k})$ $\in $ $A_{k}\times D_{k}$ $\subset A_{k}\times $ $[%
\tilde{c},\frac{\tilde{c}}{r_{\min }}]$ satisfying
\begin{equation}
\tilde{m}_{k}:=h_{k}(\tilde{x}_{k},\tilde{d}_{k}):=\frac{(2\tilde{d}_{k})^{s}
}{\mu _{k}(B^{\prime }(\tilde{x}_{k},\tilde{d}_{k}))}=\max
\{h_{k}(x,d):(x,d)\in A_{k}\times D_{k}\},  \label{seal}
\end{equation}%
and even if $\left\{ m_{k}\right\} $ and $\{\tilde{m}_{k}\}$
coincide, $B^{\prime }(x_{k},d_{k})$ and $B^{\prime
}(\tilde{x}_{k},\tilde{d}_{k})$ might be different. Actually, for
each $k\in \mathbb{N}^{+}$ there could be more than one pair
$(\tilde{x}_{k},\tilde{d}_{k})$ satisfying (\ref{seal}). However,
an important feature of the algorithm is that, in many cases, it
gives the candidate to optimal ball. At the end of the section we
shall prove that the sequence of maximal values
$\{\tilde{m}_{k}\}$ converges to the maximum $P^{s}(E)$ (see
Theorem \ref{conv}). We shall need the following result from
\cite{[LLM2]} to show the existence of the sequences given in
Lemma~\ref{gooddk}.
\begin{theorem}[\cite{[LLM2]} Theorem 4.9]
\label{main}Let $x_{0},y_{0}\in E$ \ with $x_{0}\neq y_{0}$ and\
let $d_{0}=dist(x_{0},y_{0})$. If the sequences $\{x_{k}\}$,
$\{y_{k}\}$ are such that $x_{k},y_{k}\in A_{k}$ $\forall k\in
\mathbb{N}^{+}$, $x_{k}\rightarrow x_{0}$ and $y_{k}\rightarrow
y_{0}$, then $d_{k}:=dist(x_{k},y_{k}) \rightarrow d_{0}$ and
\begin{equation}
m_{k}:=\frac{(2d_{k})^{s}}{\mu _{k}(B^{\prime }(x_{k},d_{k}))}\rightarrow
\frac{(2d_{0})^{s}}{\mu (B(x_{0},d_{0}))}.  \label{convergence}
\end{equation}
\end{theorem}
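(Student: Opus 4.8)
The plan is to separate the two convergences asserted in the theorem. The convergence $d_{k}\to d_{0}$ is immediate from the continuity of the Euclidean distance on $\mathbb{R}^{n}\times\mathbb{R}^{n}$: since $x_{k}\to x_{0}$ and $y_{k}\to y_{0}$, we get $d_{k}=dist(x_{k},y_{k})\to dist(x_{0},y_{0})=d_{0}$, and $d_{0}>0$ because $x_{0}\neq y_{0}$. Everything then reduces to proving that $\mu_{k}(B^{\prime}(x_{k},d_{k}))\to\mu(B(x_{0},d_{0}))$ and dividing.

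First I would record that $\mu_{k}$ converges weakly to $\mu$. By Notation~\ref{not2} and (\ref{mukdif}) one can write $\mu_{k}=\sum_{\mathbf{i}_{k}\in M^{k}}\mu(E_{\mathbf{i}_{k}})\,\delta_{x_{\mathbf{i}_{k}}}$, where $x_{\mathbf{i}_{k}}\in E_{\mathbf{i}_{k}}$ and $|E_{\mathbf{i}_{k}}|\leq r_{\max}^{k}$. Hence, for any $g\in C(E)$,
\begin{equation*}
\Big|\int g\,d\mu_{k}-\int g\,d\mu\Big|\leq\sum_{\mathbf{i}_{k}}\mu(E_{\mathbf{i}_{k}})\sup_{y\in E_{\mathbf{i}_{k}}}|g(x_{\mathbf{i}_{k}})-g(y)|\leq\omega_{g}(r_{\max}^{k})\longrightarrow 0,
\end{equation*}
where $\omega_{g}$ is the modulus of continuity of $g$ on the compact set $E$; this yields $\mu_{k}\rightharpoonup\mu$.

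The core step is to upgrade this weak convergence of measures to convergence of the $\mu_{k}$-masses of the \emph{moving} balls $B^{\prime}(x_{k},d_{k})$, whose centres and radii both drift with $k$. I would do this by sandwiching between fixed balls. Given $\varepsilon\in(0,d_{0}/2)$, pick $K$ with $dist(x_{k},x_{0})<\varepsilon$ and $|d_{k}-d_{0}|<\varepsilon$ for $k\geq K$; a one-line triangle-inequality argument then gives
\begin{equation*}
B^{\prime}(x_{0},d_{0}-2\varepsilon)\subseteq B^{\prime}(x_{k},d_{k})\subseteq B(x_{0},d_{0}+2\varepsilon)\qquad(k\geq K).
\end{equation*}
Applying the portmanteau inequalities (the $\liminf$ bound on open sets to the left inclusion, the $\limsup$ bound on closed sets to the right one),
\begin{equation*}
\mu(B^{\prime}(x_{0},d_{0}-2\varepsilon))\leq\liminf_{k}\mu_{k}(B^{\prime}(x_{k},d_{k}))\leq\limsup_{k}\mu_{k}(B^{\prime}(x_{k},d_{k}))\leq\mu(B(x_{0},d_{0}+2\varepsilon)).
\end{equation*}
Now let $\varepsilon\downarrow 0$: by continuity of the finite measure $\mu$ from below and from above the outer terms tend to $\mu(B^{\prime}(x_{0},d_{0}))$ and $\mu(B(x_{0},d_{0}))$ respectively, and these coincide because $\mu(\partial B(x_{0},d_{0}))=0$ (\cite{[MT]}, as used in the proof of Lemma~\ref{bolatras}). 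Hence $\mu_{k}(B^{\prime}(x_{k},d_{k}))\to\mu(B(x_{0},d_{0}))$.

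Finally, since $x_{0}\in E$ and $d_{0}>0$, writing $x_{0}=\pi(\mathbf{i})$ the cylinder $E_{\mathbf{i}(k)}$ lies in $B^{\prime}(x_{0},d_{0})$ for $k$ large, so $\mu(B(x_{0},d_{0}))>0$; therefore the denominators $\mu_{k}(B^{\prime}(x_{k},d_{k}))$ are eventually positive and $m_{k}=(2d_{k})^{s}/\mu_{k}(B^{\prime}(x_{k},d_{k}))\to(2d_{0})^{s}/\mu(B(x_{0},d_{0}))$, which is (\ref{convergence}). The main obstacle is the third paragraph: weak convergence alone controls neither $\mu_{k}$ on a fixed ball nor, still less, on a ball whose centre and radius move with $k$; the sandwiching device combined with the null-boundary property $\mu(\partial B(x_{0},d_{0}))=0$ is exactly what bridges this gap, and one must take care to pair open sets with $\liminf$ and closed sets with $\limsup$.
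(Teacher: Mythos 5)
Your argument is correct and complete, but it cannot be compared line-by-line with a proof in this paper, because the paper does not prove Theorem~\ref{main} at all: it imports it verbatim from \cite{[LLM2]} (Theorem 4.9 there) and only adds a remark explaining that the original statement (closed balls, $d_{0}\in[c,1]$) transfers to open balls and to any $d_{0}$ bounded away from $0$ and $\infty$, precisely because $\mu(\partial B(x,d))=0$. What you have produced is a self-contained substitute built from exactly the ingredients the paper does make available: your second paragraph re-derives Lemma~\ref{keylemaconv}~(iv) (weak convergence of $\mu_{k}$ to $\mu$) from the identity $\mu_{k}(x)=\mu(E_{\mathbf{i}_{k}(x)})$ of Lemma~\ref{keylemaconv}~(iii) together with $|E_{\mathbf{i}_{k}}|\leq r_{\max}^{k}$, and your third paragraph is the standard but genuinely necessary upgrade from weak convergence to convergence of masses of moving balls, via the two-sided sandwich $B^{\prime}(x_{0},d_{0}-2\varepsilon)\subseteq B^{\prime}(x_{k},d_{k})\subseteq B(x_{0},d_{0}+2\varepsilon)$, the portmanteau inequalities, and the null-boundary property from \cite{[MT]}. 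The remaining details are also in order: $\varepsilon<d_{0}/2$ keeps the inner radius positive, monotonicity of each $\mu_{k}$ lets you pass from the fixed balls to the moving one, and the observation that some cylinder $E_{\mathbf{i}(k)}\subset B^{\prime}(x_{0},d_{0})$ gives $\mu(B(x_{0},d_{0}))>0$, so the quotient converges. In effect your proof plays the same role as Lemma~\ref{convstep} later in the paper (which controls $|\mu_{k}(B^{\prime}(x_{k},d_{k}))-\mu(B(x_{k},d_{k}))|$ by counting boundary cylinders in an annulus) but replaces that combinatorial cylinder-counting by the measure-theoretic portmanteau route; both devices hinge on the same fact that ball boundaries are $\mu$-null. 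I see no gap.
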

\begin{remark}
The formulation of Theorem \ref{main} in \cite{[LLM2]} is slightly
different. Namely $d:=dist(x_{0},y_{0})\in \lbrack c,1]$ and the
definition of $m_{k}$\ uses closed instead of open balls. However,
the proof only uses the fact that $d$ is bounded away from zero
and infinity, (which holds for $d_{0}$) and it is easy to see that
it also works with open balls. Notice that the boundary of any
ball is a $\mu -$null set. Therefore \cite[Theorem 4.9]{[LLM2]}
can be reformulated as above.
\end{remark}
Before stating Lemma~\ref{gooddk}, let us recall some basic
results given in \cite{[LLM2]} that we shall use to show the
convergence of the \emph{algorithm sequence}.
\begin{lemma}[\cite{[LLM2]} Lemma 4.1]
\label{keylemaconv}
\begin{enumerate}
\item[(i)] For every $x\in E$ there exists a sequence $\{x_{k}\}$
with $ x_{k}\in A_{k}$ such that $\lim_{k\rightarrow \infty
}x_{k}=x$. \item[(ii)] For every $k\in \mathbb{N}^{+}$,
\begin{equation*}
A_{k}\subset A_{k+1}.
\end{equation*}
\item[(iii)] Let $k\in \mathbb{N}^{+}$, $x\in A_{k}$ and $\mathbf{i}_{k}\in
M^{k}$ be such that $\ f_{\mathbf{i}_{k}}(y)=x$ for some $y\in A_{1}$, then
\begin{equation}
\mu _{k}(x)=\mu _{k}(E_{\mathbf{i}_{k}})=\mu (E_{\mathbf{i}_{k}}).
\label{keyinq}
\end{equation}
\item[(iv)] The sequence $\{\mu _{k}\}_{k\in \mathbb{N}^{+}}$ converges
weakly to $\mu $ and thus
\begin{equation}
\lim_{k\rightarrow \infty }\mu _{k}(A)=\mu (A)  \label{convcjtos}
\end{equation}
for every set $A$ satisfying $\mu (\partial A)=0$.
\end{enumerate}
\end{lemma}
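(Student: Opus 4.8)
The four assertions share a common bookkeeping that I would set up first. By Notation~\ref{not2} every $x\in A_{k}$ carries a \emph{unique} code $\mathbf{i}_{k}(x)\in M^{k}$ with $x=f_{\mathbf{i}_{k}(x)}(y)$ for the fixed point $y\in A_{1}$ of $f_{i_{k}(x)}$. Since each fixed point is $\pi(iii\ldots)\in E$, we have $A_{1}\subset E$ and hence $x=f_{\mathbf{i}_{k}(x)}(y)\in E_{\mathbf{i}_{k}(x)}\subset E$, so $A_{k}\subset E$; moreover by (\ref{mukdif}) the point mass is $\mu_{k}(\{x\})=r_{\mathbf{i}_{k}(x)}^{s}=\mu(E_{\mathbf{i}_{k}(x)})$ via (\ref{mucil}). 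With this in hand I would prove \textit{(ii)} first, as it drives an induction. The operator $S\Psi(B)=\bigcup_{i}f_{i}(B)$ is monotone for inclusion, so $A_{k-1}\subset A_{k}$ forces $A_{k}=S\Psi(A_{k-1})\subset S\Psi(A_{k})=A_{k+1}$; the base case $A_{1}\subset A_{2}$ holds because each fixed point satisfies $x_{j}=f_{j}(x_{j})\in f_{j}(A_{1})\subset S\Psi(A_{1})=A_{2}$. Induction then gives $A_{k}\subset A_{k+1}$ for all $k$.

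For \textit{(i)}, given $x=\pi(\mathbf{i})\in E$ I would take $x_{k}\in A_{k}$ to be the point with code $\mathbf{i}(k)$, so $x_{k}=f_{\mathbf{i}(k)}(y_{k})$ with $y_{k}\in A_{1}$. Both $x_{k}$ and $x$ lie in $E_{\mathbf{i}(k)}=f_{\mathbf{i}(k)}(E)$ (the latter by (\ref{proj})), hence $dist(x_{k},x)\leq|E_{\mathbf{i}(k)}|=r_{\mathbf{i}(k)}\,|E|\leq r_{\max}^{k}\to 0$, giving $x_{k}\to x$. For \textit{(iii)}, iterating the SSC (\ref{SSC}) shows that distinct length-$k$ codes produce disjoint cylinders $E_{\mathbf{i}_{k}}$; since every $z\in A_{k}$ lies in $E_{\mathbf{i}_{k}(z)}$, this yields $A_{k}\cap E_{\mathbf{i}_{k}}=\{x\}$, and therefore $\mu_{k}(E_{\mathbf{i}_{k}})=\mu_{k}(\{x\})=r_{\mathbf{i}_{k}}^{s}=\mu(E_{\mathbf{i}_{k}})$ by (\ref{mukdif}) and (\ref{mucil}). (The same disjointness also justifies the uniqueness of codes asserted in Notation~\ref{not2}.)

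The substantive part is \textit{(iv)}, and I expect the crux to lie there. The key intermediate claim is that $\mu_{k}$ agrees with $\mu$ on every cylinder of generation $l\leq k$. Writing $E_{\mathbf{j}_{l}}$ as the disjoint union of the generation-$k$ cylinders refining it and applying \textit{(iii)} gives
\begin{equation*}
\mu_{k}(E_{\mathbf{j}_{l}})=\sum_{\mathbf{i}_{k}:\,\mathbf{i}(l)=\mathbf{j}_{l}}r_{\mathbf{i}_{k}}^{s}=r_{\mathbf{j}_{l}}^{s}\Big(\sum_{i=1}^{N}r_{i}^{s}\Big)^{k-l}=r_{\mathbf{j}_{l}}^{s}=\mu(E_{\mathbf{j}_{l}}),
\end{equation*}
using the defining relation $\sum_{i}r_{i}^{s}=1$. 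To upgrade this to weak convergence I would fix a bounded continuous $g$ and $\varepsilon>0$, invoke uniform continuity of $g$ on the compact set $E\supset spt(\mu_{k})\cup spt(\mu)$ to choose $l$ so large that $g$ varies by less than $\varepsilon$ on each generation-$l$ cylinder (their diameters being $\leq r_{\max}^{l}\to 0$), and then for $k\geq l$ replace $g$ by its value $g(p_{\mathbf{j}_{l}})$ at a sample point on each cylinder. Both $\int g\,d\mu_{k}$ and $\int g\,d\mu$ then lie within $\varepsilon$ of the common step-sum $\sum_{\mathbf{j}_{l}}g(p_{\mathbf{j}_{l}})\mu(E_{\mathbf{j}_{l}})$, so $|\int g\,d\mu_{k}-\int g\,d\mu|\leq 2\varepsilon$; letting $\varepsilon\to 0$ proves weak convergence, and the Portmanteau theorem yields (\ref{convcjtos}) for every $A$ with $\mu(\partial A)=0$. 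The main obstacle is exactly this last step: the cylinder-by-cylinder identities are exact but hold only for scales $l\leq k$, so it is the uniform-continuity approximation that converts finitely many exact identities per scale into genuine weak convergence.
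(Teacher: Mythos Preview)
Your proof is correct. Note, however, that the present paper does not give its own proof of this lemma: it is quoted verbatim from \cite{[LLM2]} (Lemma~4.1 there) and used as a black box, so there is no in-paper argument to compare against. Your write-up supplies a clean self-contained verification: the induction for \textit{(ii)} via monotonicity of $S\Psi$ and the fixed-point base case, the coding argument for \textit{(i)} with the diameter bound $r_{\max}^{k}$, the SSC-disjointness observation $A_{k}\cap E_{\mathbf{i}_{k}}=\{x\}$ for \textit{(iii)}, and the cylinder-exactness plus uniform-continuity/Portmanteau route for \textit{(iv)} are all sound and standard. The only minor remark is that your use of Notation~\ref{not2} for the uniqueness of codes presupposes the SSC-disjointness you later invoke in \textit{(iii)}; you acknowledge this, and indeed the disjointness of generation-$k$ cylinders is what makes the code $\mathbf{i}_{k}(x)$ well defined in the first place.
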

\begin{lemma}
\label{gooddk} There exist $\ k_{0}\in \mathbb{N}^{+}$ and two
sequences $ \{x_{k}\}$, $\{y_{k}\}\subset A_{k}$ such that
\begin{equation}
d_{k}:=dist(x_{k},y_{k})\in D_{k}\ \forall k\geq k_{0},
\label{newdk}
\end{equation}
and
\begin{equation}
m_{k}:=\frac{(2d_{k})^{s}}{\mu
_{k}(B^{\prime}(x_{k},d_{k}))}\rightarrow P^{s}(E)\quad\text{ as
}\quad k\rightarrow \infty  \label{mk}
\end{equation}
\end{lemma}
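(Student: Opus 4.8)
The plan is to realize $P^{s}(E)$ as the density of a single explicit ball $B(x_0,d_0)$ whose boundary meets $E$, to approximate the centre $x_0$ and a boundary point $y_0$ by points of the finite sets $A_k$, and then to let Theorem~\ref{main} transfer the convergence to the discrete densities. The only delicate point will be keeping the approximating distances $d_k$ inside the closed interval $[\tilde c,\tilde c/r_{\min}]$ that the algorithm actually searches.

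First I would set $a:=\tilde c/r_{\min}$, which lies in $(0,c/r_{\min}]$ because $\tilde c\le c$ by (\ref{ckbound}), and apply Corollary~\ref{newattain}: it produces $x_0\in E$ and $d_0\in[\tilde c,\tilde c/r_{\min})$ with $P^{s}(E)=h(x_0,d_0)$ and $\partial B(x_0,d_0)\cap E\neq\emptyset$, whence, by (\ref{atta}), a point $y_0\in E$ with $dist(x_0,y_0)=d_0$ and
\[
P^{s}(E)=\frac{(2d_0)^{s}}{\mu(B(x_0,d_0))}=\frac{(2d_0)^{s}}{\mu(B^{\prime}(x_0,d_0))},
\]
the last equality since the boundary of a ball is $\mu$-null. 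As $d_0\ge\tilde c>0$, the points $x_0,y_0$ are distinct and $d_0$ is bounded away from $0$ and $\infty$; choosing, by Lemma~\ref{keylemaconv}~(i), sequences $x_k,y_k\in A_k$ with $x_k\to x_0$ and $y_k\to y_0$, Theorem~\ref{main} yields $d_k:=dist(x_k,y_k)\to d_0$ and
\[
m_k:=\frac{(2d_k)^{s}}{\mu_k(B^{\prime}(x_k,d_k))}\longrightarrow\frac{(2d_0)^{s}}{\mu(B(x_0,d_0))}=P^{s}(E),
\]
which is (\ref{mk}). It then remains to obtain (\ref{newdk}): since $x_k,y_k\in A_k$ and $D_k$ consists exactly of the distances between points of $A_k$ satisfying (\ref{condition}), and $d_k\to d_0$, it is enough that $d_0$ lie in the \emph{open} interval $(\tilde c,\tilde c/r_{\min})$, for then $\tilde c\le d_k\le\tilde c/r_{\min}$ for all large $k$ and hence $d_k\in D_k^{x_k}\subset D_k$.

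So the real work is to arrange $d_0\in(\tilde c,\tilde c/r_{\min})$, and this is where the scaling invariance of $h$ (Lemma~\ref{bolatras}) is used. The upper endpoint is already excluded by Corollary~\ref{newattain}. Suppose $d_0=\tilde c$, take $\mathbf i\in\mathbb M$ with $\pi(\mathbf i)=x_0$ and put $i_1=\mathbf i(1)$. Since $\tilde c\le c$, any point of $E$ at distance $<\tilde c$ from $x_0$ lies in the same first–generation cylinder as $x_0$, so $B^{\prime}(x_0,\tilde c)\cap E\subset E_{i_1}$, and Lemma~\ref{bolatras}~(i) gives $h(f_{i_1}^{-1}(x_0),\tilde c/r_{i_1})=h(x_0,\tilde c)=P^{s}(E)$ with $\tilde c/r_{i_1}\in(\tilde c,\tilde c/r_{\min}]$. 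If $r_{i_1}>r_{\min}$ this radius is strictly interior, and Lemma~\ref{attainE} applied to $(f_{i_1}^{-1}(x_0),\tilde c/r_{i_1})$ provides the required boundary point, so one simply replaces the optimal triple by this one. If $r_{i_1}=r_{\min}$ the new radius is the right endpoint $\tilde c/r_{\min}$ and cannot be moved by scaling; here one instead uses the continuity of $d\mapsto\mu(B(x_0,d))$ (valid since $\mu(\partial B)=0$), which forces $h(x_0,d)\to P^{s}(E)$ as $d\downarrow\tilde c$, to produce strictly interior radii $d_n\downarrow\tilde c$ with $\partial B(x_0,d_n)\cap E\neq\emptyset$ and $h(x_0,d_n)\to P^{s}(E)$ (taking $d_n$ to be the largest boundary–touching radius below a small $\tilde c+\varepsilon_n$ and using that $\mu(B(x_0,\cdot))$ is constant between consecutive boundary–touching radii); a diagonal argument over the corresponding triples, each approximated by $A_k$-points as above and concatenated over blocks of the index $k$, then delivers sequences $\{x_k\},\{y_k\}\subset A_k$ satisfying both (\ref{mk}) and (\ref{newdk}).

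The main obstacle is precisely this normalization of $d_0$. The algorithm probes only radii in the closed interval $[\tilde c,\tilde c/r_{\min}]$, so one must sidestep the possibility that the continuous optimum is pinned at the lower endpoint $\tilde c$, where the approximating distances $d_k$ could drop just below $\tilde c$ and leave $D_k$; by contrast, in the centered Hausdorff measure case the competing balls could be restricted to those meeting two basic cylinders, which avoids the problem entirely. The device that makes the packing case go through is the invariance of the density $h$ under the inverse branches $f_{i_1}^{-1}$ (Lemma~\ref{bolatras}), together with the fact that $\tilde c\le c$, which guarantees that $B^{\prime}(x_0,\tilde c)$ is contained in one generation–one cylinder.
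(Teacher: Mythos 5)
Your proof is correct and shares the paper's skeleton---Corollary~\ref{newattain} with $a=\tilde c/r_{\min}$ to realize $P^{s}(E)=h(x_{0},d_{0})$ with $d_{0}=dist(x_{0},y_{0})\in[\tilde c,\tilde c/r_{\min})$, then Lemma~\ref{keylemaconv}~(i) and Theorem~\ref{main} to pass to $A_{k}$-approximants, with Lemma~\ref{bolatras}~(i) reserved for the troublesome case $d_{0}=\tilde c$---but the two arguments genuinely diverge at that endpoint case. The paper does not try to relocate the optimal ball: it keeps the original approximating pairs $(x_{k},y_{k})$, splits the sequence into the terms with $d_{k}\ge\tilde c$ (which eventually lie in $D_{k}$) and those with $d_{k}<\tilde c$, and pulls back only the latter by $f_{i_{1}}^{-1}$; since $d_{k}<\tilde c$ forces $d_{k}/r_{i_{1}}<\tilde c/r_{i_{1}}\le\tilde c/r_{\min}$ while $d_{k}/r_{i_{1}}\to\tilde c/r_{i_{1}}>\tilde c$, the rescaled distances fall into the \emph{closed} interval $[\tilde c,\tilde c/r_{\min}]$ for large $k$ regardless of whether $r_{i_{1}}=r_{\min}$, and no diagonal argument is needed. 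You instead rescale the limiting optimal ball itself, which works cleanly when $r_{i_{1}}>r_{\min}$ but forces a separate treatment of $r_{i_{1}}=r_{\min}$ via boundary-touching radii $d_{n}\downarrow\tilde c$ and a concatenation over blocks of the index $k$. That subcase does go through, but one step deserves to be made explicit: the existence of radii in $(\tilde c,\tilde c+\varepsilon)$ whose spheres meet $E$ is not automatic---you must invoke the maximality of $h(x_{0},\tilde c)$ (exactly as in the proof of Lemma~\ref{attainE}) to exclude that $\mu(B(x_{0},\cdot))$ is constant on $[\tilde c,\tilde c+\varepsilon]$, since constancy would yield $h(x_{0},\tilde c+\varepsilon)>P^{s}(E)$, contradicting (\ref{maxalg}); your parenthetical only gestures at this. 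On balance the paper's sequence-splitting device is the more economical route, precisely because the admissible interval is closed at its right endpoint, which is where the pulled-back distances accumulate, whereas your version buys a self-contained reduction to the ``interior radius'' case at the cost of an extra density argument and a diagonal extraction.
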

\begin{proof}
 Let $x_{0},y_{0}\in E$ be such that
\begin{equation}
P^{s}(E)=h(x_{0},d_{0})\text{ \ with \ }d_{0}:=dist(x_{0},y_{0})\in \lbrack
\tilde{c},\frac{\tilde{c}}{r_{\min }})  \label{maxpack}
\end{equation}
(see Corollaries \ref{maxallgo} and \ref{newattain}). By
Lemma~\ref{keylemaconv} (i) and Theorem~\ref{main}, we can take
two sequences $\{x_{k}\}$, $\{y_{k}\}$ with $x_{k}$, $y_{k}\in $
$A_{k}$ $\ \forall k\in \mathbb{N}^{+}$, such that
\begin{equation*}
x_{k}\rightarrow x_{0},\qquad y_{k}\rightarrow y_{0},\qquad
d_{k}:=dist(x_{k},y_{k})\rightarrow d_{0}
\end{equation*}
and
\begin{equation*}
m_{k}:=\frac{(2d_{k})^{s}}{\mu _{k}(B^{\prime
}(x_{k},d_{k}))}\rightarrow P^{s}(E)\quad \text{as } k\rightarrow
\infty .
\end{equation*}
Observe that $D_{k}$ is the set of those distances between points
of $A_{k}$ within the interval
$[\tilde{c},\frac{\tilde{c}}{r_{\min }}]$. We claim that, either
there exists $k_{0}\in \mathbb{N}^{+}$ such that $d_{k}\in D_{k} $
for every $k\geq k_{0}$, or we can construct two other sequences
$\{x_{k}^{\prime }\}$, $\{y_{k}^{\prime }\}$ with $x_{k}^{\prime
},y_{k}^{\prime }\in A_{k}\ \forall k\in \mathbb{N}^{+}$,
satisfying (\ref{newdk}) and (\ref{mk}). It is clear that if
$d_{0}\in (\tilde{c},\frac{\tilde{c}}{r_{\min }})$ we are in the
first case and the theorem holds. Suppose now that
$d_{0}=\tilde{c}$ and let $\mathbf{i=}i_{1}i_{2}...\in \mathbb{M}$
be such that $\pi (\mathbf{i})=x_{0}$. Then, as $\tilde{c}\leq c$
(see Remark~\ref{relations}), $B^{\prime }(x_{0},d_{0})\cap
E\subset E_{i_{1}}$. Therefore, we can apply
Lemma~\ref{bolatras}~(i) to get
\begin{equation}
h(x_{0},d_{0})=h(f_{i_{1}}^{-1}(x_{0}),\frac{d_{0}}{r_{i}})\text{,}
\label{eq}
\end{equation}
where $\frac{d_{0}}{r_{i_{1}}}%
=dist(f_{i_{1}}^{-1}(x_{0}),f_{i_{1}}^{-1}(y_{0}))$. Let
\begin{equation*}
\{d_{k}^{1}\}:=\{d_{k}:d_{k}\geq \tilde{c}\}\qquad \text{  and
}\qquad \{d_{k}^{2}\}:=\{d_{k}:d_{k}<\tilde{c}\}.
\end{equation*}%
and suppose without loss of generality that both subsequences are
not finite. For $j=1,2$, denote by $\{x_{k}^{j}\}$\ and
$\{y_{k}^{j}\}$\ the convergent subsequences of $\{x_{k}\}$\ and
$\{y_{k}\}$, respectively, such that
$\{d_{k}^{j}\}=\{dist(x_{k}^{j},y_{k}^{j})\}$. Now, by
construction there exists $k_{0}\in \mathbb{N}^{+}$\ such that
$\tilde{c}\leq d_{k}^{1}\leq \frac{\tilde{c}}{r_{\min }}$\ $
\forall k\geq k_{0}$\ and, by the SSC and (\ref{ckbound}), there
exists $ k_{1}\in \mathbb{N}^{+}$\ such that $x_{k}^{2},$
$y_{k}^{2}\in A_{k}\cap E_{i_{1}}$for all $ k\geq k_{1}$. Hence,
by Theorem~\ref{main} and (\ref{eq}),
\begin{equation*}
\frac{(2\frac{d_{k}^{2}}{r_{i_{1}}})^{s}}{\mu _{k}(B^{\prime
}(f_{i_{1}}^{-1}(x_{k}^{2}),\frac{d_{k}^{2}}{r_{i_{1}}}))}\rightarrow
h(f_{i_{1}}^{-1}(x_{0}),\frac{d_{0}}{r_{i_{1}}})=P^{s}(E)\quad
\text{ as } j\rightarrow \infty .
\end{equation*}
Since $\frac{\tilde{c}}{r_{\min }}>\frac{d_{k}^{2}}{r_{i_{1}}}
=dist(f_{i_{1}}^{-1}(x_{k}^{2}),f_{i_{1}}^{-1}(y_{k}^{2}))\rightarrow
\frac{d_{0}}{r_{_{i_{1}}}}\in (\tilde{c},\frac{\tilde{c}}{r_{\min
}}]$, there exists $k_{2}\geq k_{1}$\ such that
$\frac{d_{k}^{2}}{r_{i_{1}}}\in \lbrack
\tilde{c},\frac{\tilde{c}}{r_{\min }}]$ $\forall k\geq k_{2}$.
Moreover, by Lemma~\ref{keylemaconv}~(ii),
$f_{i_{1}}^{-1}(x_{k}^{2}),$\ $ f_{i_{1}}^{-1}(y_{k}^{2})\in
A_{k}$\ $\forall k\geq k_{2}$. This concludes the proof of the
theorem as the sequences
\begin{equation*}
x_{k}^{\prime }:=\left\{
\begin{array}{c}
x_{k}^{1}\text{ if }d_{k}\geq \tilde{c} \\
f_{i_{1}}^{-1}(x_{k}^{2})\text{ if }d_{k}<\tilde{c}
\end{array}
\right. \qquad \text{\ and}\qquad \text{ }y_{k}^{\prime }:=\left\{
\begin{array}{c}
y_{k}^{1}\text{ if }d_{k}\geq \tilde{c} \\
f_{i_{1}}^{-1}(y_{k}^{2})\text{ if }d_{k}<\tilde{c}
\end{array}
\right.
\end{equation*}
satisfy (\ref{newdk}) and (\ref{mk}).
\end{proof}

We state as a lemma the following result extracted from the proof
of \cite[Theorem 4.13]{[LLM2]} that we shall need to prove the
convergence of $\{\tilde{m}_{k}\}$.
\begin{lemma}
\label{convstep}For any sequence $\{(x_{k},d_{k})\}_{k=1}^{\infty
}$\ in $ A\times D$\
\begin{equation*}
\lim_{k\rightarrow \infty }|\mu _{k}(B^{\prime
}(x_{k},d_{k}))-\mu(B(x_{k},d_{k}))|=0.
\end{equation*}
\end{lemma}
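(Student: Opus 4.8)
The plan is to exploit the uniform approximation structure built into the construction: the measures $\mu_k$ are discrete measures sitting on the finite sets $A_k$, and for balls whose radii are bounded away from $0$ and $\infty$ the discrepancy between the discrete mass $\mu_k(B'(x_k,d_k))$ and the continuous mass $\mu(B(x_k,d_k))$ should vanish uniformly. Since the statement claims convergence for \emph{any} sequence $\{(x_k,d_k)\}$ in $A\times D$, and $D$ takes values only in the compact interval $[\tilde c,\frac{\tilde c}{r_{\min}}]$ bounded away from zero, I would argue by contradiction: suppose there is a sequence along which $|\mu_k(B'(x_k,d_k))-\mu(B(x_k,d_k))|$ does not go to zero, pass to a subsequence on which it stays $\geq\varepsilon$, extract (by compactness of $E$ and of $[\tilde c,\frac{\tilde c}{r_{\min}}]$, noting $x_k\in A_k\subset E$ when $|E|=1$ — actually $A_k$ is $r_{\max}^k$-dense in $E$ so $\mathrm{dist}(x_k,E)\to 0$) a convergent pair $x_k\to x_0\in E$, $d_k\to d_0\in[\tilde c,\frac{\tilde c}{r_{\min}}]$, and derive a contradiction with the known convergence result.

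The key steps, in order, would be: first, reduce to the case of a convergent subsequence $x_k\to x_0$, $d_k\to d_0$ with $d_0>0$; second, replace $x_k\in A_k$ by points of $E$ if necessary (or simply observe $x_k\in A_k\subset E$ under the normalization $|E|=1$, since $A_1$ consists of fixed points which lie in $E$ and $A_k=S\Psi(A_{k-1})\subset E$), so that Theorem~\ref{main} applies; third, invoke Theorem~\ref{main} with the constant sequences $y_0$ replaced appropriately — more precisely, since $d_k=\mathrm{dist}(x_k,y_k)$ need not hold for points of $D$, I would instead use Lemma~\ref{keylemaconv}(iv): because $\mu(\partial B(x_0,d_0))=0$ (boundaries of balls are $\mu$-null by \cite{[MT]}), weak convergence gives $\mu_k(B'(x_0,d_0))\to\mu(B(x_0,d_0))$ and $\mu_k(B(x_0,d_0))\to\mu(B(x_0,d_0))$; fourth, control the effect of moving the center and radius from $(x_0,d_0)$ to $(x_k,d_k)$ by sandwiching: for small $\eta>0$, eventually $B'(x_0,d_0-\eta)\subset B'(x_k,d_k)\subset B(x_0,d_0+\eta)$, so $\limsup_k|\mu_k(B'(x_k,d_k))-\mu(B(x_0,d_0))|\leq \mu(B(x_0,d_0+\eta)\setminus B'(x_0,d_0-\eta))$, which tends to $0$ as $\eta\to 0$ since the annulus shrinks to $\partial B(x_0,d_0)$, a $\mu$-null set; fifth, combine with $\mu(B(x_k,d_k))\to\mu(B(x_0,d_0))$ (continuity of $d\mapsto\mu(B(x_0,d))$ at $d_0$, again because the boundary sphere is null and by the same sandwich applied to $\mu$ itself) to conclude the difference tends to $0$, contradicting the assumption.

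The main obstacle I anticipate is the uniformity: the naive sandwich argument I described controls the limit along a fixed convergent subsequence, but it must be made to work simultaneously for \emph{all} sequences, i.e. the modulus of continuity of $\mu$ near spheres (the function $\eta\mapsto\sup_{x\in E,\,d\in[\tilde c,\tilde c/r_{\min}]}\mu(B(x,d+\eta)\setminus B'(x,d-\eta))$) must be shown to vanish as $\eta\to 0$ uniformly in $x$ and $d$. This follows from a compactness argument — the map $(x,d)\mapsto\mu(B(x,d))$ is continuous on the compact set $E\times[\tilde c,\tilde c/r_{\min}]$, hence uniformly continuous — but it is exactly the point where one needs that $\mu$ of every sphere is null (so that $d\mapsto\mu(B(x,d))$ is continuous, not merely monotone), and this is the fact imported from \cite{[MT]} and already used repeatedly. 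Since the proof in \cite[Theorem 4.13]{[LLM2]} is cited as the source, I would in practice simply lift that argument; the only thing to check is that it goes through verbatim with open balls $B'$ in place of closed balls in the discrete term, which is immediate because $\mu_k$ and $\mu$ agree up to $\mu_k(\partial B)$ and one can pass freely between $B'$ and $B$ in the limit.
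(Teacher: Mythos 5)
Your argument is correct, but it follows a genuinely different route from the paper's. You proceed indirectly: assuming the discrepancy does not vanish, you extract a convergent subsequence $(x_k,d_k)\to(x_0,d_0)$ (legitimate, since $A_k\subset E$ and $D\subset[\tilde c,\tilde c/r_{\min}]$ are contained in compact sets), sandwich $B'(x_k,d_k)$ between the fixed balls $B'(x_0,d_0-\eta)$ and $B(x_0,d_0+\eta)$, and use the portmanteau form of weak convergence from Lemma~\ref{keylemaconv}(iv) together with $\mu(\partial B(x_0,d_0))=0$ to conclude that both $\mu_k(B'(x_k,d_k))$ and $\mu(B(x_k,d_k))$ tend to $\mu(B(x_0,d_0))$, a contradiction. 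The paper instead argues directly and quantitatively: it partitions the generation-$k$ cylinders into those contained in $B'(x_k,d_k)$ (the class $G_k$), those straddling the boundary ($P_k$), and the straddling ones carrying a point of $A_k$ inside the ball ($R_k$); because $\mu_k$ and $\mu$ agree on cylinders by (\ref{keyinq}), the whole discrepancy lives on $P_k$ and is bounded by $\mu\bigl(B(x_k,d_k+r_{\max}^{k})\setminus B(x_k,d_k-r_{\max}^{k})\bigr)$, after which the same compactness-plus-null-spheres argument you use finishes the job. The paper's decomposition buys an explicit error bound (the $\mu$-measure of an annulus of width $2r_{\max}^{k}$ about the sphere), which is in the spirit of a computability paper; your version is softer and shorter, leaning on the already-established weak convergence instead of the cylinder structure. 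One remark: your concern about uniformity over all sequences is a non-issue — the lemma is a statement about a single given sequence, and the subsequence/contradiction device already disposes of it, so you do not actually need a uniform modulus of continuity for $(x,d)\mapsto\mu(B(x,d))$; that said, the compactness argument you sketch for it is sound and is essentially what the paper invokes in its own final step. Your closing observation that passing between $B'$ and $B$ in the discrete term is harmless is also correct.
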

\begin{proof}
Let $\{(x_{k},d_{k})\}_{k=1}^{\infty }$\ \ be a sequence in
$A\times D$. For any\textbf{\ }$k\in \mathbb{N}^{+}$ satisfying
that $r_{\max }^{k}<c$, define
\begin{eqnarray*}
G_{k} &:&=\{E_{\mathbf{i}_{k}}:\mathbf{i}_{k}\in M^{k}\text{ and } E_{\mathbf{i}_{k}}\subset B^{\prime }(x_{k},d_{k})\} \\
P_{k} &:&=\{E_{\mathbf{i}_{k}}:\mathbf{i}_{k}\in M^{k}\text{,
}E_{\mathbf{i} _{k}}\cap B^{\prime }(x_{k},d_{k})\neq \emptyset
\text{ and }E_{\mathbf{i}_{k}}\cap (E\setminus B^{\prime }(x_{k},d_{k}))\neq \emptyset \} \\
R_{k} &:&=\{E_{\mathbf{i}_{k}}\in P_{k}:\text{ }A_{k}\cap
E_{\mathbf{i} _{k}}\cap B^{\prime }(x_{k},d_{k})\neq \emptyset \}.
\end{eqnarray*}
Then,
\begin{eqnarray*}
R_{k} &\subset &P_{k} \\
E\cap B^{\prime }(x_{k},d_{k}) &=&G_{k}\cup (B^{\prime }(x_{k},d_{k})\cap
P_{k}) \\
\mu _{k}(B^{\prime }(x_{k},d_{k})) &=&\mu _{k}(G_{k})+\mu _{k}(R_{k}) \\
\mu (B(x_{k},d_{k})) &=&\mu (B^{\prime }(x_{k},d_{k}))=\mu (G_{k})+\mu
(B^{\prime }(x_{k},d_{k})\cap P_{k})\qquad \text{and} \\
P_{k} &\subset &B^{\prime }(x_{k},d_{k}+r_{\max }^{k})\setminus
B(x_{k},d_{k}-r_{\max }^{k})
\end{eqnarray*}
Moreover, by (\ref{keyinq}), $\mu (G_{k})=\mu _{k}(G_{k})$ and
$\mu (R_{k})=\mu _{k}(R_{k})$. This, together with the triangle
inequality gives
\begin{eqnarray*}
|\mu _{k}(B^{\prime }(x_{k},d_{k}))-\mu (B(x_{k},d_{k}))| &=&|\mu
_{k}(R_{k})-\mu (B^{\prime }(x_{k},d_{k})\cap P_{k})|\leq \\
&\leq &\mu (R_{k})-\mu (B^{\prime }(x_{k},d_{k})\cap R_{k})+\mu (\left(
B^{\prime }(x_{k},d_{k})\cap P_{k}\right) \setminus R_{k}))\leq \\
&\leq &\mu (R_{k})+\mu (\left( B(x_{k},d_{k})\cap P_{k}\right) \setminus
R_{k})\leq \mu (P_{k})\leq \\
&\leq &\mu (B(x_{k},d_{k}+r_{\max }^{k})\setminus
B(x_{k},d_{k}-r_{\max }^{k})).
\end{eqnarray*}
Thus, the lemma holds if $\lim_{k\rightarrow \infty }\mu
(B(x_{k},d_{k}+r_{\max }^{k})\setminus B(x_{k},d_{k}-r_{\max
}^{k}))=0$. The last is true by the compactness of $E\times
\lbrack \tilde{c},\frac{\tilde{c}}{r_{\min }}]$, the continuity of
$\mu $ and the fact that $\mu (\partial B(x,d))=0$ (see
\cite{[MT]}).
 \end{proof}

We obtain, as immediate consequence of Lemma~\ref{convstep} and
(\ref{condition}), the following result useful in the proof of
Theorem~\ref{conv}.
\begin{corollary}
\label{corolconv}Let $\{(\tilde{x}_{k},\tilde{d}_{k})\}_{k=1}^{\infty }$ be
such that $\tilde{m}_{k}=h_{k}(\tilde{x}_{k},\tilde{d}_{k})$ $\forall k\in
\mathbb{N}^{+}$. Then
\begin{enumerate}
\item[i)] $\lim_{k\rightarrow
\infty}|h_{k}(\tilde{x}_{k},\tilde{d}_{k})-h(\tilde{x}_{k},\tilde{d}_{k})|=0$
\item[ii)] If $\{\tilde{m}_{k_{j}}\}_{j=1}^{\infty }\ $\ is a
convergent subsequence of $\{\tilde{m}_{k}\}_{k=1}^{\infty }$,
then \ $ \lim_{j\rightarrow
\infty}\tilde{m}_{k_{j}}=\lim_{j\rightarrow
\infty}h(\tilde{x}_{k_{j}},\tilde{d}_{k_{j}}).$
\end{enumerate}
\end{corollary}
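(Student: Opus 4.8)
The plan is to deduce Corollary~\ref{corolconv} directly from Lemma~\ref{convstep}, keeping in mind that the sequence $\{(\tilde{x}_{k},\tilde{d}_{k})\}$ lies in $A\times D$ by construction (step~5 of the algorithm), so Lemma~\ref{convstep} applies verbatim. For part~i), I would write
\begin{equation*}
|h_{k}(\tilde{x}_{k},\tilde{d}_{k})-h(\tilde{x}_{k},\tilde{d}_{k})|
=(2\tilde{d}_{k})^{s}\left|\frac{1}{\mu_{k}(B^{\prime}(\tilde{x}_{k},\tilde{d}_{k}))}-\frac{1}{\mu(B(\tilde{x}_{k},\tilde{d}_{k}))}\right|
=(2\tilde{d}_{k})^{s}\frac{|\mu_{k}(B^{\prime}(\tilde{x}_{k},\tilde{d}_{k}))-\mu(B(\tilde{x}_{k},\tilde{d}_{k}))|}{\mu_{k}(B^{\prime}(\tilde{x}_{k},\tilde{d}_{k}))\,\mu(B(\tilde{x}_{k},\tilde{d}_{k}))}.
\end{equation*}
The numerator tends to $0$ by Lemma~\ref{convstep}. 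The factor $(2\tilde{d}_{k})^{s}$ is bounded since $\tilde{d}_{k}\in[\tilde{c},\tilde{c}/r_{\min}]$ by (\ref{condition}). So the only thing to check is that the denominator stays bounded away from $0$, i.e.\ that $\mu_{k}(B^{\prime}(\tilde{x}_{k},\tilde{d}_{k}))$ and $\mu(B(\tilde{x}_{k},\tilde{d}_{k}))$ do not degenerate to $0$.

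For the continuous factor $\mu(B(\tilde{x}_{k},\tilde{d}_{k}))$ this is easy: the ball has radius at least $\tilde{c}$, and $\min\{\mu(B(x,d)):x\in E,\ d\geq\tilde{c}\}>0$ by compactness of $E$, positivity of $\mu$ on every ball centered in $E$, and lower semicontinuity of $x\mapsto\mu(B(x,\tilde{c}))$ (again using $\mu(\partial B)=0$). For the discrete factor $\mu_{k}(B^{\prime}(\tilde{x}_{k},\tilde{d}_{k}))$, note that $\tilde{x}_{k}\in A_{k}$ always has at least one other point of $A_{k}$ at distance $\leq\tilde{d}_{k}$ inside the \emph{open} ball — indeed $\tilde{m}_{k}$ is finite by definition of the algorithm (step~5.2 only computes $h_k$ at distances $d_j$ with $j-t(j)\geq 1$), which forces $\mu_{k}(B^{\prime}(\tilde{x}_{k},\tilde{d}_{k}))>0$; combined with the first bound and Lemma~\ref{convstep}, $\liminf_k \mu_{k}(B^{\prime}(\tilde{x}_{k},\tilde{d}_{k}))>0$. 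Hence the denominator is bounded below, and i) follows. I expect this lower-bound-on-the-denominator step to be the only real point requiring care; everything else is bookkeeping.

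Part~ii) is then immediate: if $\{\tilde{m}_{k_{j}}\}$ converges, then by i) applied along the subsequence $k_j$ we have $|\tilde{m}_{k_{j}}-h(\tilde{x}_{k_{j}},\tilde{d}_{k_{j}})|\to 0$, so $\{h(\tilde{x}_{k_{j}},\tilde{d}_{k_{j}})\}$ converges to the same limit. I would state this in one sentence. Note that no subsequential extraction of $\{\tilde{x}_{k_j}\}$ or $\{\tilde{d}_{k_j}\}$ themselves is needed here — the statement is purely about the numerical difference of the two density values evaluated at the \emph{same} pair — which is exactly why this corollary is the convenient form to feed into the proof of Theorem~\ref{conv}.
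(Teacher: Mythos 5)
Your proposal is correct and follows the paper's intended route: the paper states the corollary as an immediate consequence of Lemma~\ref{convstep} and (\ref{condition}), and your argument is exactly the natural expansion of that, with the one genuinely non-trivial point (bounding $\mu_{k}(B^{\prime}(\tilde{x}_{k},\tilde{d}_{k}))\,\mu(B(\tilde{x}_{k},\tilde{d}_{k}))$ away from zero via compactness of $E$, $\mu(\partial B)=0$, and Lemma~\ref{convstep} itself) correctly identified and handled.
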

We are now ready to show our main theorem.

\begin{proof}[Proof of Theorem~\protect\ref{conv}]
 (\ref{lowerbound}) is immediate since, for any $k\in \mathbb{N}^{+}$ such that
  $\mu_{k}(B^{\prime }(\widetilde{x}_{k},\widetilde{d}_{k}))=\mu(B(\widetilde{x}_{k},\widetilde{d}_{k}))$, (\ref{maxalg}) implies
that
\begin{equation*}
P^{s}(E)=\max_{(x,d)\in E\times
[\tilde{c},\frac{\tilde{c}}{r_{\min }}]}h(x,d)\geq
h(\widetilde{x}_{k},\widetilde{d}_{k})=h_{k}(\widetilde{x}_{k},
\widetilde{d}_{k})=\tilde{m}_{k}.
\end{equation*}
We turn now to the convergence. Observe that, although the
sequence $\tilde{m_{k}}$ is not necessarily monotone, for any $k
\in \mathbb{N}^{+}$, the argument of $h_{k}$ ranges on the compact
set $A\times [ \tilde{c},\frac{\tilde{c}}{r_{\min }}]$ and hence
we can take a convergent subsequence $\{\tilde{m}_{k_{j}}\}_{j \in
\mathbb{N}^{+}}$. It is enough to show that\textbf{\
}$\tilde{m}:=\lim_{j\rightarrow \infty
}\tilde{m}_{k_{j}}=P^{s}(E)$. We assume the subsequence
$\{\tilde{m}_{k_{j}}\}_{j\in \mathbb{N}^{+}}$ to be the whole
sequence and write $\tilde{m} :=\lim_{k\rightarrow
\infty}\tilde{m}_{k}$. We first show that $\tilde{m}\geq
P^{s}(E)$. Let $\{m_{k}\}_{k \in
\mathbb{N}^{+}}:=\{h_{k}(x_{k},d_{k})\}_{k\in \mathbb{N}^{+}}$ and
$k_{0}\in \mathbb{N}^{+}$ as in Lemma~\ref{gooddk}, then
$\lim_{k\rightarrow \infty}m_{k}=P^{s}(E)$ and
\begin{equation*}
d_{k}\in D_{k}\qquad \forall k\geq k_{0}.
\end{equation*}
Hence, if we suppose on the contrary that $\tilde{m}<P^{s}(E)$, we
can find $ k_{1}\in \mathbb{N}^{+}$ such that, for any $k\geq
k_{1}$,
\begin{equation}
\tilde{m}_{k}<m_{k}  \label{basiccontrad}
\end{equation}
in contradiction with (\ref{seal}). With the aim of showing the
reverse inequality, assume that $\tilde{m}>P^{s}(E)$. Then, by
Corollary~\ref{corolconv}\textit{~}ii), there exists $k_{2}\in
\mathbb{N}^{+}$ such that for any $k\geq k_{2}$
\begin{equation*}
P^{s}(E)<h(\tilde{x}_{k},\tilde{d}_{k}).
\end{equation*}
Moreover, by (\ref{maxalg}), we can take $(x_{0},d_{0})\in E\times
\lbrack \tilde{c},\frac{\tilde{c}}{r_{\min }}]$ such that
\begin{equation*}
P^{s}(E)=\max_{(x,d)\in E\times \lbrack
\tilde{c},\frac{\tilde{c}}{r_{\min }}
]}h(x,d)=h(x_{0},d_{0})<h(\tilde{x}_{k},\tilde{d}_{k})
\end{equation*}
which is a contradiction since \ $(\tilde{x}_{k},\tilde{d}_{k})\in
E\times \lbrack \tilde{c},\frac{\tilde{c}}{r_{\min }}]$. This
concludes the proof of the theorem.
 \end{proof}

\section{\protect\bigskip Examples\label{examples}}
In this section we test the algorithm with several examples that allow us to
explain how, besides providing empirical evidence of the precise (or
approximate) value of $P^{s}(E),$ the algorithm suggests a candidate to
optimal ball. In many cases, this information can be used to prove
rigorously that the value suggested by the algorithm is in fact the true
value of $P^{s}(E)$. We test this with some examples showing first which is
the information provided by the algorithm and secondly how this info can be
used to prove theorems giving the precise value of $P^{s}(E)$.
\subsection{Testing efficiency: The packing measure of a class of Sierpinski
gaskets with SSC.}
The algorithm presented in this job is specially useful to find the exact
value of the packing measure when the examples under\textbf{\ }consideration
have an \textit{stable} \textit{behavior} in the sense that both, the
ball(s) selected by the algorithm and the values of $\tilde{m}_{k}$ (see (%
\ref{mktildedef})) are the same in consecutive generations. In
these cases the results obtained could be considered as an
empirical evidence that the inverse density of the selected
ball(s) is going to give the precise value of the packing measure.
Actually,\ this ball(s) can be used to show that the value of the
packing measure equals to the corresponding $\tilde{m}_{k}$. This
is precisely the method that we are going to follow to prove
Theorem~\ref{packsier}.\ We first present a table of results
obtained from applying the algorithm to some members of the class
of Sierpinski gaskets defined by (\ref{sierclas}). The stability
observed (see Table~\ref{siertable}) allow us the obtention of
candidates for optimal balls. In the proof of Theorem
\ref{packsier}\ we make use of these candidates, that are the
actual optimal balls, to show \ that $P^{s(r)}(S_{r})=\left(
2\frac{1-r}{r}\right) ^{s(r)}$\ when $r\in (0,\frac{1}{3}]$. The
next table shows the results obtained after applying the algorithm
to different members of the class $S_{r}$ when $r\in
(0,\frac{1}{3}]$. Keeping the notation given in Section
\ref{description}, $\ \{x_{1},x_{2},x_{3}\}$ stands for the set of
fixed points of the similarities $\{f_{1,}f_{2,}f_{3} \} $,
respectively. More precisely ,
$\{x_{1},x_{2},x_{3}\}=\{(0,0),(1,0),(\frac{1}{2},\frac{\sqrt{3}}{2})\}$.
\mathstrut
\begin{table}
\begin{tabular}[b]{|c|c|c|}
\hline $r$ & $s(r)$ & Algorithm output \\
\hline $\frac{1}{3}$ & $1$ & \multicolumn{1}{|l|}{
\begin{tabular}{|c|c|c|c|c|}
& \textbf{$\tilde{m}_{k}$} &
\textbf{$[\tilde{x}_{k},\tilde{y}_{k}]$} &
\textbf{$\tilde{d}_{k}$} & \textbf{$B(\tilde{x}_{k},\tilde{d}_{k})$} \\
\hline
\begin{tabular}{l}
\vspace{0.25cm} \\
$k=0$ \\
\vspace{0.25cm}
\end{tabular}
& $\ \ \ 6$ \ \ \ \  &
\begin{tabular}{c}
$\lbrack x_{1},x_{2}]$ \\
$\lbrack x_{2},x_{1}]$ \\
$\lbrack x_{3},x_{2}]$
\end{tabular}
& $1$ &
\begin{tabular}{c}
$B(x_{1},1)$ \\
$B(x_{2},1)$ \\
$B(x_{3},1)$
\end{tabular}
\\ \hline
\begin{tabular}{l}
\vspace{0.25cm} \\
$1\leq k\leq 8$ \\
\vspace{0.25cm}
\end{tabular}
&
\begin{tabular}{l}
$\quad 4=\left( 2\frac{1-r}{r}\right) ^{s(r)} \ \quad$
\end{tabular}
&
\begin{tabular}{c}
$\lbrack x_{1},f_{2}(x_{0})]$ \\
$\lbrack x_{2},f_{1}(x_{2})]$ \\
$\lbrack x_{3},f_{1}(x_{3})]$
\end{tabular}
&$\frac{2}{3}$&
\begin{tabular}{c}
$B(x_{1},\frac{2}{3})$ \\
$B(x_{2},\frac{2}{3})$ \\
$B(x_{3},\frac{2}{3})$
\end{tabular}
\\ 
\end{tabular}
} \\ \hline
$\frac{1}{4}$ & $\frac{\log 3}{\log 4}$ &
\multicolumn{1}{|l|}{
\begin{tabular}{|c|c|c|c|c|}
\begin{tabular}{l}
\vspace{0.25cm} \\
$k=0$ \\
\vspace{0.25cm}
\end{tabular}
& $5.1961$ &
\begin{tabular}{c}
$\lbrack x_{1},x_{2}]$ \smallskip\\
$\lbrack x_{2},x_{1}]$ \smallskip \\
$\lbrack x_{3},x_{2}]$ \smallskip
\end{tabular}
& $1$ \smallskip &
\begin{tabular}{c}
$B(x_{1},1)$ \\
$B(x_{2},1)$ \\
$B(x_{3},1)$
\end{tabular}
\\ \hline
\begin{tabular}{l}
\vspace{0.25cm} \\
$1\leq k\leq 7$ \\
\vspace{0.25cm}
\end{tabular}
&
\begin{tabular}{l}
$4.1368=\left( 2\frac{1-r}{r}\right) ^{s(r)}$
\end{tabular}
&
\begin{tabular}{c}
$\lbrack x_{1},f_{2}(x_{0})]$ \\
$\lbrack x_{2},f_{1}(x_{2})]$ \\
$\lbrack x_{3},f_{1}(x_{3})]$
\end{tabular}
& $\frac{3}{4}$ \smallskip &
\begin{tabular}{c}
$B(x_{1},\frac{3}{4})$ \smallskip\\
$B(x_{2},\frac{3}{4})$ \smallskip\\
$B(x_{3},\frac{3}{4})$ \smallskip
\end{tabular}
\\ 
\end{tabular}
} \\ \hline $\frac{2}{10}$ & $\frac{-\log 3}{\log 0.2}$ &
\multicolumn{1}{|l|}{
\begin{tabular}{|c|c|c|c|c|}
\begin{tabular}{l}
\vspace{0.25cm} \\
$k=0$ \\
\vspace{0.25cm}
\end{tabular}
& $4.815$ &
\begin{tabular}{c}
$\lbrack x_{1},x_{2}]$ \\
$\lbrack x_{2},x_{1}]$ \\
$\lbrack x_{3},x_{2}]$
\end{tabular}
& $1$ &
\begin{tabular}{c}
$B(x_{1},1)$ \\
$B(x_{2},1)$ \\
$B(x_{3},1)$
\end{tabular}
\\ \hline
\begin{tabular}{l}
\vspace{0.25cm} \\
$1\leq k\leq 7$ \\
\vspace{0.25cm}
\end{tabular}
&
\begin{tabular}{l}
$4.1348=\left( 2\frac{1-r}{r}\right) ^{s(r)}$
\end{tabular}
&
\begin{tabular}{c}
$\lbrack x_{1},f_{2}(x_{0})]\smallskip $ \\
$\lbrack x_{2},f_{1}(x_{2})]\smallskip$ \\
$\lbrack x_{3},f_{1}(x_{3})]\smallskip $
\end{tabular}
& $\frac{8}{10}$ &
\begin{tabular}{c}
$B(x_{1},\frac{8}{10})$ \\
$B(x_{2},\frac{8}{10})$ \\
$B(x_{3},\frac{8}{10})$
\end{tabular}
\\ 
\end{tabular}
} \\ \hline
 $\frac{1}{27}$ & $\frac{\log 3}{\log 27}$ &
\multicolumn{1}{|l|}{
\begin{tabular}{|c|c|c|c|c|}
\begin{tabular}{l}
\vspace{0.25cm} \\
$k=0$ \\
\vspace{0.25cm}
\end{tabular}
& $3.7798$ &
\begin{tabular}{c}
$\lbrack x_{1},x_{2}]$ \\
$\lbrack x_{2},x_{1}]$ \\
$\lbrack x_{3},x_{2}]$
\end{tabular}
& $1$ &
\begin{tabular}{c}
$B(x_{1},1)$ \\
$B(x_{2},1)$ \\
$B(x_{3},1)$
\end{tabular}
\\ \hline
\begin{tabular}{l}
\vspace{0.25cm} \\
$1\leq k\leq 7$ \\
\vspace{0.25cm}
\end{tabular}
&
\begin{tabular}{l}
$3.7325 =\left( 2\frac{1-r}{r}\right) ^{s(r)}$
\end{tabular}
&
\begin{tabular}{c}
$\lbrack x_{1},f_{2}(x_{0})]$ \\
$\lbrack x_{2},f_{1}(x_{2})]$ \\
$\lbrack x_{3},f_{1}(x_{3})]$
\end{tabular}
& $\frac{26}{27}$ &
\begin{tabular}{c}
$B(x_{1},\frac{26}{27})$ \\
$B(x_{2},\frac{26}{27})$ \\
$B(x_{3},\frac{26}{27})$
\end{tabular}
\\ 
\end{tabular}
} \\ \hline
\end{tabular}
\caption{Algorithm output for some members of the class $S_r$. }
\label{siertable}
\end{table}

\begin{figure}[h]
\includegraphics[width=0.6\textwidth]{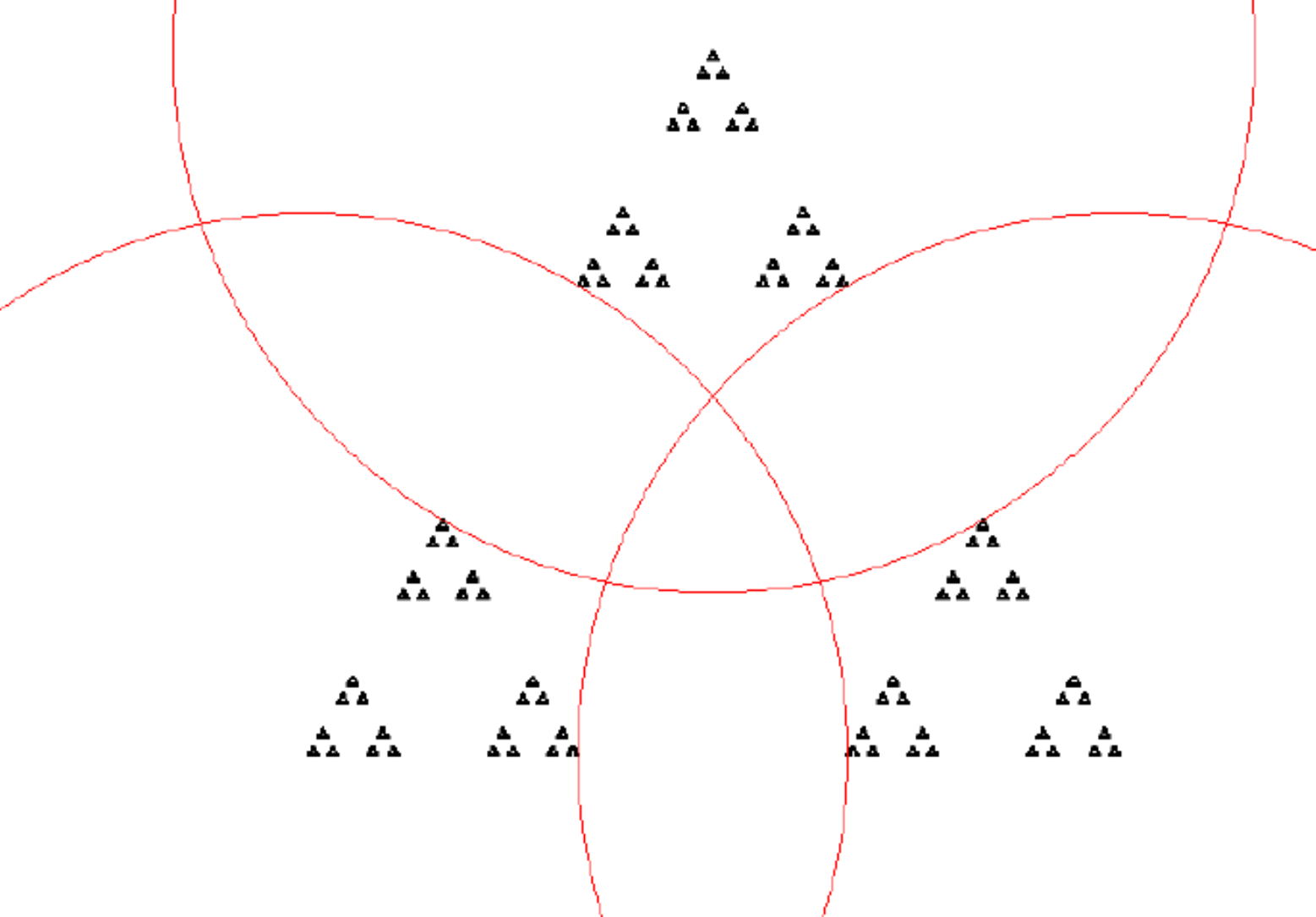}
\includegraphics[width=0.6\textwidth]{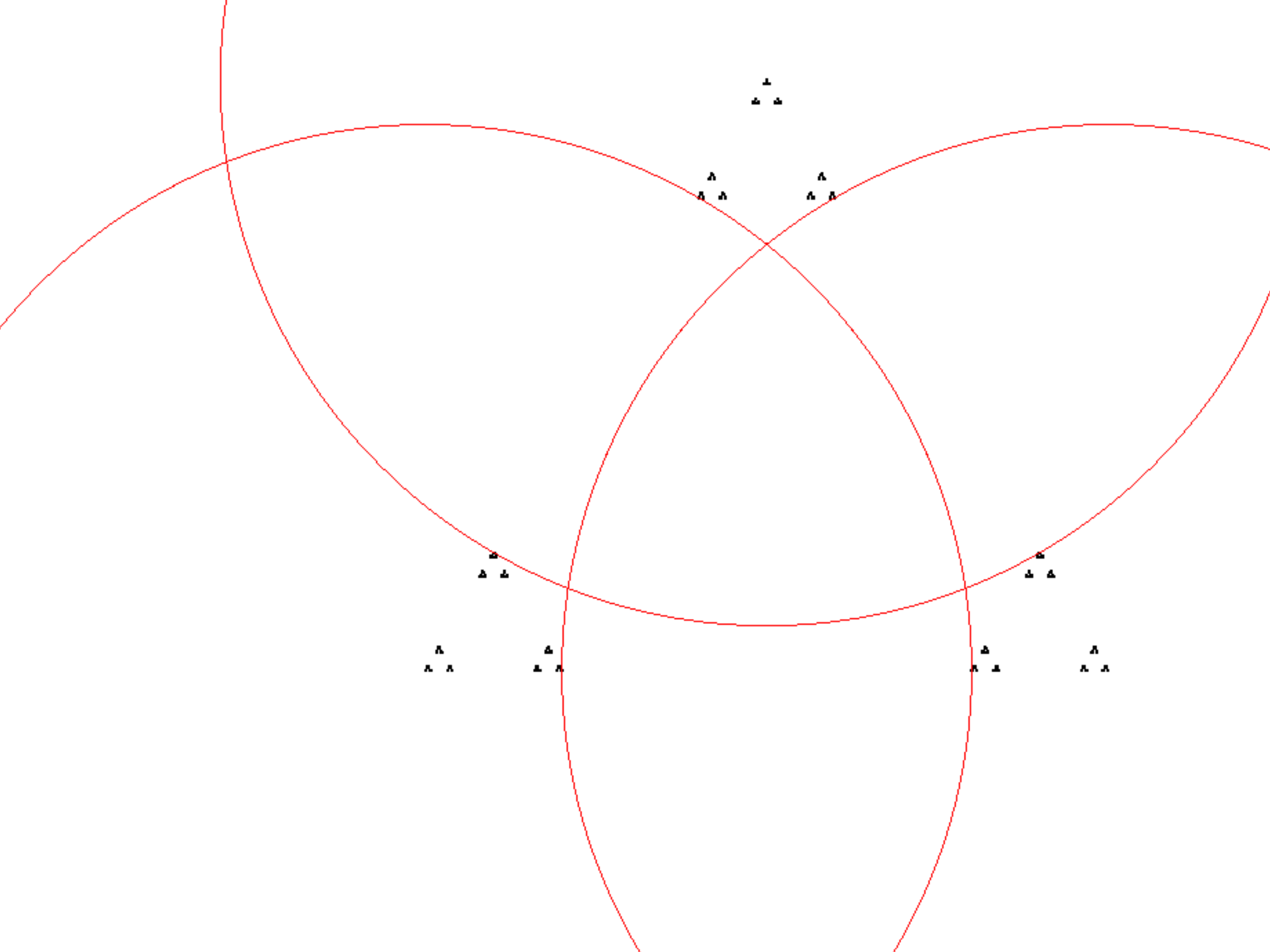}
\caption{Algorithm selected balls for $S_{\frac{1}{3}}$ and
$S_{\frac{2}{10}}$.}\label{sier13}
\end{figure}

\mathstrut
From the results given in Table~\ref{siertable}, we can
conjecture that
\begin{equation*}
P^{s(r)}(S_{r})=h(x_{i},1-r)=\left(
2\frac{1-r}{r}\right)^{s(r)}\text{ where }i=1,2,3\text{;}
\end{equation*}
provided\textbf{\ }$r\in (0,\frac{1}{3}]$. This conjecture is
based on the fact that the algorithm selects in all the cases the
three same balls, namely $B(x_{1},1-r)$, $B(x_{2},1-r)$ and
$B(x_{3},1-r)$ (see Figure~\ref{sier13}), all having inverse
density equal to $\left( 2 \frac{1-r}{r}\right) ^{s}$. Therefore
one can say that there is empirical evidence that the inverse
density, $h(x_{i},1-r)$, of any of these balls is going to give
the precise value of $P^{s(r)}(S_{r})$ when $r\in
(0,\frac{1}{3}]$. We turn now to the proof of
Theorem~\ref{packsier} that states the above conjecture to be
true.

\begin{proof}[Proof of Theorem~\protect\ref{packsier}]
The lower bound holds trivially by taking $(x,d)=((0,0),1-r)$ in
(\ref{packssc3}). The upper bound follows from
Theorem~\ref{keylema} if the following inequality holds
\begin{equation}
h(x,d)\leq \left( 2\frac{1-r}{r}\right) ^{s(r)}\text{ }\forall
x\in S_{r} \text{ and }1-2r\leq d\leq \frac{1-2r}{r}.
\label{inequ}
\end{equation}
Due to the symmetry of the Sierpinski gaskets, we need to show (\ref{inequ})
only for $x\in f_{1}(S_{r})$ and $1-2r\leq d\leq \frac{1-2r}{r}$. Moreover,
by the geometry of the picture one can see that, for a fixed $d$ $\in
\lbrack 1-2r,\frac{1-2r}{r}]$ and every $x\in f_{1}(S_{r})$,
\begin{equation}
\mu (B(x,d))\geq \mu (B((0,0),d))\text{.}  \label{restrict}
\end{equation}%
This is because moving the center of the ball from $(0,0)$ in any
direction within the Sierpinski gasket, can only increase the
amount of set $S_{r}$ lying in the ball. More precisely, for any
$1-2r\leq d\leq \frac{1-2r}{r}$ and $x\in f_{1}(S_{r})$,
\begin{equation}
B((0,0),d)\cap S_{r}\subset B(x,d)\cap S_{r}\text{.}  \label{inclusion}
\end{equation}
We now proof (\ref{inclusion}). Observe that, given $y_{i}\in f_{i}(S_{r})$,
$i\in \{2,3\}$,
\begin{equation}
\max_{x\in (S_{r})_{1}\text{ }}dist(x,y_{i})=dist((0,0),y_{i})\text{.}
\label{max}
\end{equation}
Clearly (\ref{inclusion}) holds if, for any $x\in f_{1}(S_{r})$,
$B((0,0),d)\cap f_{i}(S_{r})\subset B(x,d)\cap S_{r}$ for $i\in
\{1,2,3\}$. It is enough to check this inclusion when $i=2,3$
since $1-2r>r$ and hence, $ B((0,0),d)\cap f_{1}(S_{r})\subset
f_{1}(S_{r})\subset B(x,d)\cap S_{r}$. Let $y\in B((0,0),d)\cap
f_{i}(S_{r})$, then (\ref{max}) implies that $ dist(x,y)\leq
dist((0,0),y)\leq d$ and hence $y\in $ $B((x,d)\cap f_{i}(S_{r})$.
This shows that $B((0,0),d)\cap f_{i}(S_{r})\subset B((x,d)\cap
f_{i}(S_{r})$ for $i\in \{2,3\}$ and concludes the proof of
(\ref{inclusion}), which in turn implies (\ref{restrict}). It
remains to prove that $h((0,0),d)\leq \left( 2\frac{1-r}{r}\right)
^{s(r)} $ for every $1-2r\leq d\leq \frac{1-2r}{r}$. By
Corollary~\ref{newattain}, we need to check the upper bound only
when $d\in \lbrack 1-2r,1] $. We divide the proof in the following
two cases:
\begin{enumerate}
\item If $d\in \left[ 1-2r,1-r\right] $ then, $\mu (B((0,0),d))=r^{s(r)}$
and hence%
\begin{equation*}
h((0,0),d)=\frac{(2d)^{s(r)}}{r^{s(r)}}\leq \left( 2\frac{1-r}{r}\right)
^{s(r)}\text{.}
\end{equation*}
\item If $d\in \lbrack 1-r,1]$, we write
\begin{equation*}
\lbrack 1-r,1]=\cup _{k=1}^{\infty }[1-r+r^{k+1},1-r+r^{k}]
\end{equation*}%
and show that, $h((0,0),d)\leq \left( 2\frac{1-r}{r}\right)
^{s(r)}$ for any $d\in \lbrack 1-r+r^{k+1},1-r+r^{k}]$ \ with
$k\geq 1$. Observe that, in this case, $\mu (B((0,0),d)\geq
r^{s(r)}+2r^{(k+1)s(r)}$ and thus,
\begin{equation*}
h((0,0),d)\leq \frac{2^{s(r)}(1-r+r^{k})^{s(r)}}{r^{s(r)}+2r^{(k+1)s(r)}}.
\end{equation*}
So we need to show
\begin{equation*}
\frac{(1-r+r^{k})^{s(r)}}{r^{s(r)}+2r^{(k+1)s(r)}}\leq
\frac{(1-r)^{s(r)}}{r^{s(r)}},
\end{equation*}
or, equivalently,
\begin{equation}
\frac{(1-r+r^{k})^{s(r)}}{(1-r)^{s(r)}}=(1+\frac{r^{k}}{1-r})^{s(r)}\leq
\frac{r^{s(r)}+2r^{(k+1)s(r)}}{r^{s(r)},}=1+2r^{ks(r)}.  \label{expresion}
\end{equation}
Let $g_{1}(t)=(1+\frac{t}{1-r})^{s(r)}$ and
$g_{2}(t)=1+2t^{s(r)}$, both functions taking the same value at
$t=0$. Then, (\ref{expresion}) holds if $g_{1}^{\prime }(t)\leq
g_{2}^{\prime }(t)$ $\forall t\in \lbrack 0,\frac{1}{3}]$ or,
equivalently,
\begin{equation*}
\frac{1}{1-r}(\frac{1}{t}+\frac{1}{1-r})^{s(r)-1}\leq 2.
\end{equation*}
The last inequality is true because $r,t\leq \frac{1}{3}$ and $s(r)-1\leq 0$
and, therefore
\begin{equation*}
\frac{1}{1-r}(\frac{1}{t}+\frac{1}{1-r})^{s(r)-1}\leq \frac{3}{2}
(3+1)^{s(r)-1}\leq 2.
\end{equation*}
\end{enumerate}
\end{proof}
\begin{remark}
What happens\ when $r>\frac{1}{3}$? In these cases we have
observed that, if the contractio factors are not bigger than
$0.365$\textbf{,} then the selected balls are still
$B(x_{i},1-r)$, $i=1,2,3$. Notice that showing (\ref{sierfor}) for
$r\in [ \frac{1}{3},0.365]$ requires a modification of the
arguments given in the proof of Theorem~\ref{packsier}. Namely, as
$s(r)\geq 1$, it will be necessary to find the right decomposition
of the interval $[1-r,1]$. Finally, we have also noticed a loss of
stability in the numerical results when $r\in (0.365,0.5)$.
Namely, the selected ball varies on a small scale from one
iteration to the next, that is, only an approximation of the
optimal ball is reached since the center of the selected ball
remains fixed  but the radius changes slightly (see tables
\ref{siertable37} and \ref{siertable42} for the cases $r=0.37$ and
$r=0.42$).
\begin{table}
\begin{tabular}{|cccc|}
\hline &
\multicolumn{1}{|c}{$\tilde{m}_{k}=\frac{(2\tilde{d}_{k})^{s}}{\mu_{k}(B(\tilde{x}_{k},\tilde{d}_{k}))}$}
& \multicolumn{1}{|c}{$[\tilde{x}_{k}, \tilde{y}_{k}]$} &
\multicolumn{1}{|c|}{$\tilde{d}_{k}=dist(\tilde{x}_{k},
\tilde{y}_{k})$} \\ \hline $k=1$ & \multicolumn{1}{|c}{$6.452802$}
& \multicolumn{1}{|c}{$ [x_{1},f_{1}(x_{3})]$} &
\multicolumn{1}{|c|}{$0.37$} \\ \hline $k=2$ &
\multicolumn{1}{|c}{$3.872817$} &
\multicolumn{1}{|c}{$[x_{1},f_{2}(x_{1})]$} &
\multicolumn{1}{|c|}{$0,63$} \\ \hline $k=3$ &
\multicolumn{1}{|c}{$3.872817$} &
\multicolumn{1}{|c}{$[x_{1},f_{2}(x_{1})]$} &
\multicolumn{1}{|c|}{$0,63$} \\ \hline $k=4$ &
\multicolumn{1}{|c}{$3.872817$} &
\multicolumn{1}{|c}{$[x_{1},f_{2}(x_{1})]$} &
\multicolumn{1}{|c|}{$0,63$} \\ \hline $k=5$ &
\multicolumn{1}{|c}{$3.872817$} &
\multicolumn{1}{|c}{$[x_{1},f_{2}(x_{1})]$} &
\multicolumn{1}{|c|}{$0,63$} \\ \hline $k=6$ &
\multicolumn{1}{|c}{$3.872817$} &
\multicolumn{1}{|c}{$[x_{1},f_{2}(x_{1})]$} &
\multicolumn{1}{|c|}{$0,63$} \\ \hline $k=7$ &
\multicolumn{1}{|c}{$3.872817$} &
\multicolumn{1}{|c}{$[x_{1},f_{2}(x_{1})]$} &
\multicolumn{1}{|c|}{$0,63$} \\ \hline $k=8$ &
\multicolumn{1}{|c}{$3.872830$} &
\multicolumn{1}{|c}{$[x_{1},f_{31111111}(x_{2})]$} &
\multicolumn{1}{|c|}{$0.630176$}
\\ \hline
$k=9$ & \multicolumn{1}{|c}{$3.872865$} &
\multicolumn{1}{|c}{$[x_{1},f_{311111111}(x_{2})]$} &
\multicolumn{1}{|c|}{$0.630065$}
\\ \hline
$k=10$ & \multicolumn{1}{|c}{$3.872849$} &
\multicolumn{1}{|c}{$[x_{1},f_{2111111111}(x_{3})]$} &
\multicolumn{1}{|c|}{$0.630024$}
\\ \hline
$k=11$ & \multicolumn{1}{|c}{$3.87283414017915$} &
\multicolumn{1}{|c}{$[x_{1},f_{31111111111}(x_{2})]$} &
\multicolumn{1}{|c|}{$0.630009$} \\ \hline
\end{tabular}
\caption{Algorithm  output  for  $S_{0.37},$ $s=\frac{-\log
3}{\log 0.37}$.} \label{siertable37}
\end{table}

\begin{table}
\begin{tabular}{|cccc|}
\hline &
\multicolumn{1}{|c|}{$\tilde{m}_{k}=\frac{(2\tilde{d}_{k})^{s}}{\mu_{k}(B(\tilde{x}_{k},\tilde{d}_{k}))}$}
& \multicolumn{1}{|c|}{$[\tilde{x}_{k}, \tilde{y}_{k}]$} &
\multicolumn{1}{|c|}{$\tilde{d}_{k}=dist(\tilde{x}_{k},
\tilde{y}_{k})$} \\ \hline $k=1$ &
\multicolumn{1}{|c|}{$2.125979$}
& \multicolumn{1}{|c|}{$[f_{1}(x_{2}),f_{2}(x_{1})]$} & \multicolumn{1}{|c|}{$0.16$} \\
\hline $k=2$ & \multicolumn{1}{|c}{$7.216871$} &
\multicolumn{1}{|c|}{$[x_{1},f_{11}(x_{2})]$} & \multicolumn{1}{|c|}{$(0.42)^{2}$} \\
\hline $k=3$ & \multicolumn{1}{|c}{$3.659900$} &
\multicolumn{1}{|c|}{$[x_{1},f_{131}(x_{2})]$} & \multicolumn{1}{|c|}{$0.287885$} \\
\hline $k=4$ & \multicolumn{1}{|c}{$3.670508$} &
\multicolumn{1}{|c}{$[x_{1},f_{1311}(x_{2})]$} & \multicolumn{1}{|c|}{$0.260556$} \\
\hline $k=5$ & \multicolumn{1}{|c}{$3.658307$} &
\multicolumn{1}{|c}{$[x_{1},f_{13111}(x_{2})]$} & \multicolumn{1}{|c|}{$0.250391$} \\
\hline $k=6$ & \multicolumn{1}{|c}{$3.642973$} &
\multicolumn{1}{|c}{$[x_{1},f_{121111}(x_{3})]$} & \multicolumn{1}{|c|}{$0.246390$} \\
\hline $k=7$ & \multicolumn{1}{|c}{$3.633895$} &
\multicolumn{1}{|c}{$[x_{1},f_{1211111}(x_{2})]$} & \multicolumn{1}{|c|}{$0.245905$} \\
\hline $k=8$ & \multicolumn{1}{|c}{$3.630715$} &
\multicolumn{1}{|c}{$[x_{1},f_{12111131}(x_{1})]$} &
\multicolumn{1}{|c|}{$0.245207$}
\\ \hline $k=9$ & \multicolumn{1}{|c}{$3.629988$} &
\multicolumn{1}{|c}{$[x_{1},f_{121111321}(x_{3})]$} &
\multicolumn{1}{|c|}{$0.246752$}
\\ \hline $k=10$ & \multicolumn{1}{|c}{$3.629498$} &
\multicolumn{1}{|c}{$[x_{1},f_{1211113323}(x_{2})]$} &
\multicolumn{1}{|c|}{$0.246745$}
\\ \hline $k=11$ & \multicolumn{1}{|c}{$3.629288$} &
\multicolumn{1}{|c}{$[x_{1},f_{13111122322}(x_{2})]$} &
\multicolumn{1}{|c|}{$0.246662$} \\ \hline
\end{tabular}
\caption{Algorithm  output  for $S_{0.42},$  $s=\frac{-\log
3}{\log 0.42}$.} \label{siertable42}
\end{table}
\end{remark}
\subsection{Further examples}
\begin{enumerate}
\item \textbf{Cantor sets in the real line} Let $C_{r}$ be the
linear Cantor set obtained as the attractor of the iterated
function system $\{f_{1}(x)=rx,$ $f_{2}(x)=1-r+rx\}$, $x\in [
0,1]$ and $0<r<\frac{1}{2}$. The numerical results arising from
the application of the algorithm to several Cantor sets belonging
to the class $C_{r}$ indicate that the optimal balls are either
$B(0,1-r)$ and $B(1,1-r)$, or images of these two balls with the
same density. More precisely, the pattern observed \ is that when
the contractio factor is smaller than
$\frac{3}{2}-\frac{1}{2}\sqrt{5}$, then $d=1-r\leq \frac{c}{r}$
and, since $B(0,1-r)$ and $B(1,1-r)$ are admissible balls (see
(\ref{packssc3})), these are the chosen ones. The situation
changes when $r<\frac{3}{2}-\frac{1}{2}\sqrt{5},$ since then $1-r$
is too big to be radius of an admissible ball. In these cases, $
B(0,1-r) $ and $B(1,1-r)$ are replaced by
$f_{\mathbf{i}_{1}(n)}B(0,1-r))$ and
$f_{\mathbf{i}_{2}(n)}(B(1,1-r))$, where, for $j=1,2$,
$\mathbf{i}_{j}(n)$ is the word formed by the letter $j$ repeated
$n$ times and $n=n(r)$ . Therefore, the numerical results indicate
that the right formula for the packing measure should be
\begin{eqnarray}\label{cantconj}
P^{s(r)}(C_{r}) &=&\frac{|B(x_{j},1-r)|^{s(r)}}{\mu
(B(x_{j},1-r))}=\frac{|f_{\mathbf{i}_{j}(n)}(B(x_{j},1-r))|^{s(r)}}{\mu (f_{\mathbf{i}_{j}(n)}(B(x_{j},1-r)))}=   \\
&=&\left( 2\frac{1-r}{r}\right) ^{s(r)},\text{ \ \ \ }j=1,2  \notag
\end{eqnarray}
where $s(r)=\frac{-\log 2}{\log r}$ is the similarity dimension of
$C_{r}$ and $\{x_{1},x_{2}\}=\{0,1\}$. A previous work by Feng
shows that (\ref{cantconj}) actually holds. In \cite{[Fen0]} the
author obtains by other means a general formula for the packing
measure of linear Cantor sets. Notice that (\ref{cantconj})
coincides with the formula given in the Sierpinski gasket case
(see Theorem \ref{packsier}).
\begin{remark}
After testing several examples we have noticed that the number of
steps needed to observe an stable behavior varies from $1$ to $4$.
If the contractio factors are near $0.5$, then it is clear that we
cannot expect stability from early\textbf{\ }iterations. This is
due to the big size of the contractio ratios that makes
$\tilde{m}_{k}$ $=h_{k}(\tilde{x}_{k}, \tilde{d}_{k})$ (see
\ref{seal}) to be a bad approximation of
$h(\tilde{x}_{k},\tilde{d}_{k})$ at early stages. However, even in
the worse cases the selected interval is the same from iteration
$4th$ to $15th$. Therefore, we can conclude that, empirically, the
algorithm is recovering the formula (\ref{cantconj}) given by Feng
in \cite{[Fen0]}.
\end{remark}
\item \textbf{Cantor sets in the plane } Let $K_{r}$ be the
attractor of the iterated function system $\Psi
=\{f_{1},f_{2},f_{3},f_{4}\}$ where
$f_{i}(\mathbf{x})=r\mathbf{x}+b_{i}$, $ i=1,2,3,4$,
$\mathbf{x}=(x,y)\in \mathbb{R}^{2}$, $0<r<\frac{1}{2}$,
$b_{1}=(0,0)$, $b_{2}=(1-r,0)$, $b_{3}=(1-r,1-r)$, and
$b_{4}=(0,1-r)$. Let $ \{x_{1},x_{2},x_{3},x_{4}\}$ be the set of
fix points of the similarities in the system $\Psi $, i.e.,
$\{x_{1},x_{2},x_{3},x_{4}\}= \{(0,0),(1,0),(1,1),(0,1)\}$. The
implementation of the algorithm to the family $K_{r}$\ leads to
results quite similar to those observed in the Sierpinski gasket
case: Whenever $r\leq \frac{1}{4}$, \ the algorithm selects the
same four balls, namely $B(x_{i},1-r)$ where $i=1,...,4$. This
means that the experimental results indicate that the right
formula for the packing measure of $K_{r}$ should be
\begin{equation}
P^{s(r)}(K_{r})=\frac{|B(x_{i},1-r)|^{s(r)}}{\mu
(B(x_{i},1-r))}=\left( 2 \frac{1-r}{r}\right) ^{s(r)},\quad
i=1,..,4  \label{cantpconj}
\end{equation}
$\forall r\in (0,\frac{1}{4}]$. Observe that for both, the planar
Cantor sets with dimension less than one and the linear Cantor
sets, one can argue as in the proof of Theorem~\ref{packsier} to
show that (\ref{cantconj}) and (\ref{cantpconj}) are the
corresponding actual formulas for the packing measure of these two
families ( $K_{r}$ $\ $with $0<r\leq \frac{1}{4}$ and $C_{r}$ with
$0<r<\frac{1}{2}$). For example, in the case of $K_{r}$ $\ $with
$0<r\leq \frac{1}{4}$ we still have that, if $d\in \lbrack
1-r+r^{k+1},1-r+r^{k}]$, then $\mu (B((0,0),d)\geq
r^{s(r)}+2r^{(k+1)s(r)}$ and therefore (\ref{expresion}) should
also hold. Notice that we have proved (\ref{expresion}) for every
$r,t\leq \frac{1}{3}$ and $s(r)-1\leq 0$, so it remains true when
$0<r\leq \frac{1}{4}$. In the case of planar Cantor sets having
dimension bigger than one, our experimental results indicate that
there is still a range of contractio factors for which
(\ref{cantpconj}) still holds. Namely, $B(x_{i},1-r)$ is selected
whenever we take $r\leq 0.35$ (see Figure \ref{cantors} for the
case $r=\frac{1}{3}$). Actually, in \cite{[BZZL]} and \cite{[BZ]}
 we can find a proof of (\ref{cantpconj}) for the cases
$r=\frac{1}{3}$ and $r\in (\frac{1}{4},\frac{\sqrt{2}}{4})$,
respectively. In both papers the authors used the classical
relation between the packing measure and the upper densities (see,
for example, \cite{[T]}) to obtain the corresponding formulas.
Finally, we have noticed that above these values of $r$ the
selected ball varies on a small scale from one iteration to the
next, meaning that only an approximation of the optimal ball is
reached. In these cases the observed selected balls are of the
form $f_{\mathbf{i}_{j}(n)}B(x_{j},1-r+\epsilon (r))$, where $j\in
\{1,2,3,4\}$, $ \mathbf{i}_{j}(n)$ is the word formed by the
letter $j$ repeated $n$ times, $n=n(r)$ and $\epsilon (r)$ is a
small number depending on $r$ (see Figure~\ref{cantors} for the
case $r=0.4$).

\begin{figure}[H]
\includegraphics[width=0.6\textwidth]{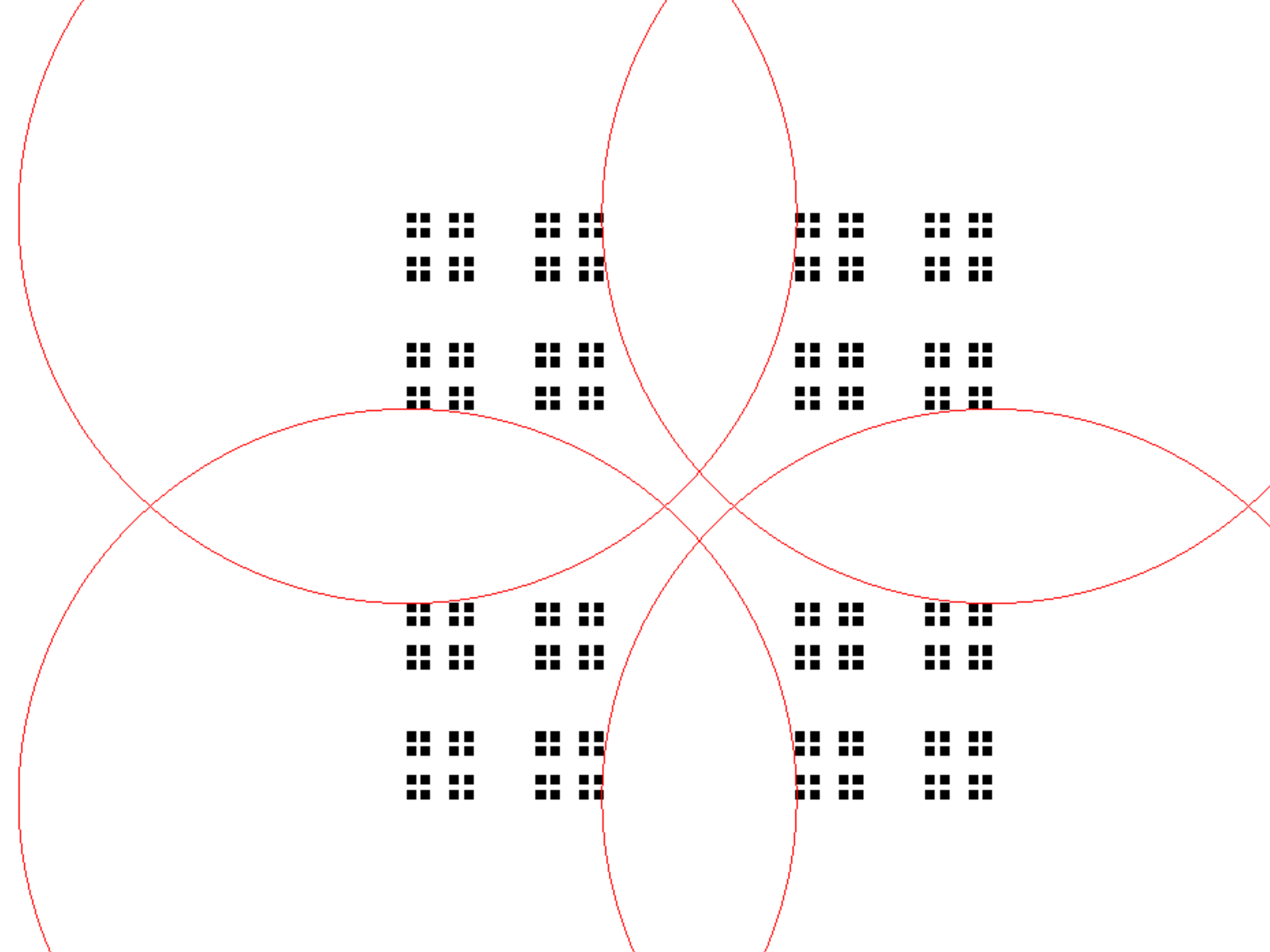}
\includegraphics[width=0.6\textwidth]{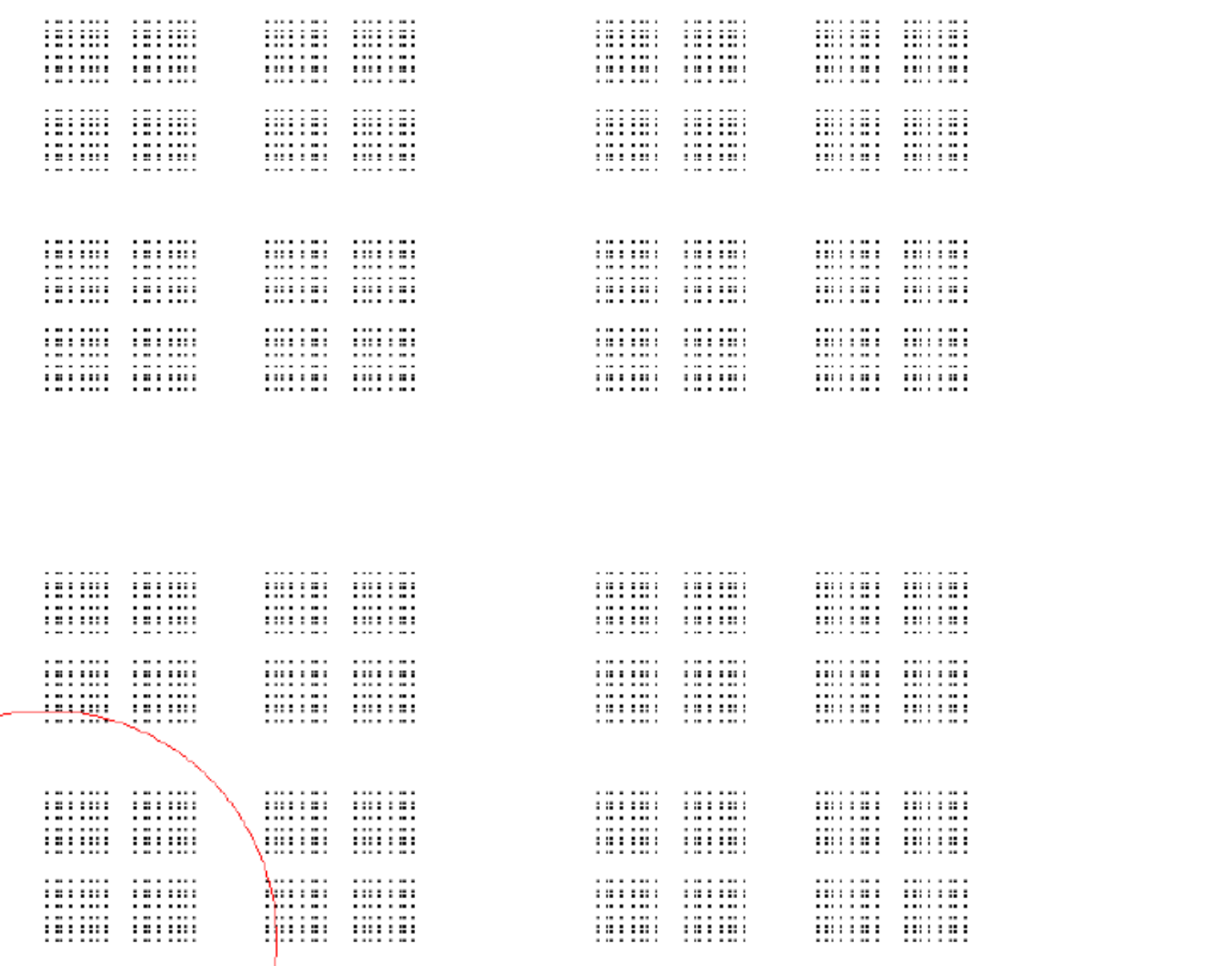}
\caption{Algorithm selected balls for $K_{\frac{1}{3}}$ and
$K_{\frac{4}{10}}$.}
\label{cantors}
\end{figure}

The above results allows the following conjecture
\end{enumerate}
\begin{conjecture}
Let $E_{r}$ be the self-similar set associated to a system of
contracting similitudes $\Psi =\{f_{1,}f_{2,...,}f_{N}\}$
satisfying the SSC and such that ,$\forall i=1,...,N$,
$f_{i}(x)=rx+b_{i}$ , where $r\in (0,\frac{1}{N})$ , $x,b_{i}\in
\mathbb{R}^{n}$. Suppose that the fixed points of the $N$
similarities of the system are the vertices of an $N$-regular
polygon then
\begin{equation*}
P^{s(r)}(P_{r})=\frac{|B(x_{i},1-r)|^{s(r)}}{\mu
(B(x_{i},1-r))}=\left( 2 \frac{1-r}{r}\right)^{s(r)}
\end{equation*}
\end{conjecture}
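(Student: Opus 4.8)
The plan is to rerun the proof of Theorem~\ref{packsier} in the $N$--gon setting, replacing the three--fold symmetry of the gasket by the full dihedral symmetry of the regular polygon. Normalize the edge length to $1$ and write $v_i$ for the fixed point of $f_i$ (a vertex of the polygon). The dihedral group of the polygon permutes $f_1,\dots ,f_N$ and hence acts on $E_r$ by isometries, so the density function $h$ is invariant under it and, in particular, all the vertex balls $B(v_i,1-r)$ have the same density.

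\emph{Lower bound.} One checks, exactly as for the gasket, that for $r$ in the stated range the pair $(v_i,1-r)$ is admissible in~(\ref{packssc3}) and that $B(v_i,1-r)$ meets $E_r\setminus f_i(E_r)$ only on its boundary, so that $\mu(B(v_i,1-r))=\mu(f_i(E_r))=r^{s(r)}$ and $P^{s(r)}(E_r)\ge h(v_i,1-r)=\bigl(2(1-r)/r\bigr)^{s(r)}$. The geometric input is that $v_i$ is the vertex of the small copy $f_i(E_r)$ pointing away from the polygon and is therefore at distance at least $1-r$ from every other cylinder; this would follow from the elementary inequality $|r(z-v_i)+(1-r)(v_j-v_i)|\ge 1-r$, valid for $z\in E_r$ and $j\neq i$ whenever $z-v_i$ makes an angle of at most $\pi/2$ with the edge $v_j-v_i$.

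\emph{Upper bound.} By Theorem~\ref{keylema} (with the parameter there equal to $1$, admissible when $r<1/N$) together with Corollary~\ref{newattain}, it suffices to bound $h(x,d)$ for $x\in E_r$ and $d\in[r,1)$, and by the dihedral symmetry we may take $x\in f_1(E_r)$. Since $d\ge r>|f_1(E_r)|$ we have $B(x,d)\supseteq f_1(E_r)$, hence $\mu(B(x,d))\ge r^{s(r)}$ and $h(x,d)\le(2d/r)^{s(r)}$, which disposes of the range $d\le 1-r$. For $d\in(1-r,1)$ the core step, mirroring~(\ref{restrict})--(\ref{inclusion}), is to prove $\mu(B(x,d))\ge\mu(B(v_1,d))$ for every $x\in f_1(E_r)$; granting this, split $(1-r,1)=\bigcup_{k\ge1}[\,1-r+r^{k+1},\,1-r+r^{k}\,]$, use $\mu(B(v_1,d))\ge r^{s(r)}+2r^{(k+1)s(r)}$ on each piece, and conclude with the inequality $(1+r^{k}/(1-r))^{s(r)}\le 1+2r^{k s(r)}$, which is precisely~(\ref{expresion}) and holds because $r<1/N\le 1/3$ and $s(r)\le 1$.

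The main obstacle is the \emph{worst-center} inequality $\mu(B(x,d))\ge\mu(B(v_1,d))$ of the previous step, and with it the passage from $N\le 4$ to $N\ge 5$. For $N=3$ it holds cylinder by cylinder because $v_1$ is the point of $f_1(E_r)$ farthest from every point of every other cylinder, so $B(v_1,d)\cap f_j(E_r)\subseteq B(x,d)\cap f_j(E_r)$; for $N=4$ this inclusion may fail but the inequality of masses can still be recovered from the reflection symmetry of each $f_j(E_r)$ about its own axis. For $N\ge 5$, however, the interior angle $(N-2)\pi/N$ of the polygon exceeds $\pi/2$, the angle hypothesis used in the lower bound breaks down, and a vertex of $f_1(E_r)$ protruding toward one neighbor lies strictly within distance $1-r$ of the opposite neighboring cylinder; consequently $B(v_i,1-r)$ genuinely overlaps a neighbor, $\mu(B(v_i,1-r))>r^{s(r)}$, and $\bigl(2(1-r)/r\bigr)^{s(r)}$ overestimates the true value. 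Thus, before the argument can run for $N\ge 5$, the conjecture itself should be corrected by replacing the radius $1-r$ with $\rho:=\min_{j\neq i}dist(v_i,f_j(E_r))$ (which coincides with $1-r$ only when $N\le 4$), and the upper bound step must be redone with the correct count of neighboring cylinders entering $\mu(B(v_1,d))$ and the corresponding decomposition of the radius interval. With those ingredients fixed, I expect the symmetry reduction, the trivial range $d\le\rho$, the centering at $v_1$, and the final calculus inequality to carry over essentially as for the gasket.
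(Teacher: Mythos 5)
You should first note that the paper itself offers no proof of this statement: it appears only as a \emph{Conjecture}, supported by the proof of Theorem~\ref{packsier} (the case $N=3$), a sketched extension to the planar Cantor sets $K_r$ (the case $N=4$), and numerical output. So there is no ``paper's own proof'' to compare yours against; what can be judged is whether your sketch is sound and what it adds. For $N\le 4$ your plan is exactly the paper's: dihedral symmetry to reduce to one cylinder, the worst-center inequality mirroring (\ref{restrict})--(\ref{inclusion}), the decomposition of $[1-r,1]$ into the intervals $[1-r+r^{k+1},1-r+r^{k}]$, and the calculus inequality (\ref{expresion}). The one genuine gap there is the one you flag yourself: for $N=4$ the pointwise inclusion (\ref{inclusion}) already fails (e.g.\ $(1-r,0)$ lies in $B((0,0),1-r)\cap f_{2}(K_r)$ but not in $B((0,r),1-r)$), and your claim that the mass inequality $\mu(B(x,d))\ge\mu(B(v_1,d))$ can be ``recovered from reflection symmetry'' is an assertion, not an argument. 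The paper is equally vague at this point, so you lose nothing relative to it, but neither the paper nor you actually closes the $N=4$ case.

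Your principal contribution is the $N\ge5$ observation, and it is correct; it shows the conjecture cannot hold as written. Concretely, normalize the edge length to $1$, put $v_{1}$ at a vertex with adjacent vertices $v_{2}$ and $v_{N}$, and let $\theta=(N-2)\pi/N$ be the interior angle. Then $f_{2}(v_{N})-v_{1}=r(v_{N}-v_{1})+(1-r)(v_{2}-v_{1})$ has squared norm $r^{2}+2r(1-r)\cos\theta+(1-r)^{2}$, which is strictly less than $(1-r)^{2}$ precisely when $r<-2\cos\theta/(1-2\cos\theta)$; for $N=5$ this threshold is $\approx 0.382$, well above the admissible range $r<1/5$. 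Hence $f_{2}(v_{N})$, together with a whole sub-cylinder around it, lies in the interior of $B(v_{1},1-r)$, so $\mu(B(v_{1},1-r))>r^{s(r)}$ and the second equality in the conjectured display is false for every admissible $r$ once $N\ge5$. Two caveats on your write-up: your geometric description of which point violates the distance bound (``a vertex of $f_{1}(E_r)$ protruding toward one neighbor\dots'') is garbled --- the clean statement is the one above, that a vertex of a \emph{neighboring} copy protrudes to within $1-r$ of $v_{1}$; and your proposed repair (replace $1-r$ by $\rho=\min_{j\ne i}\,dist(v_{i},f_{j}(E_r))$) is only a guess, since once the angle condition fails the optimal center of the largest single-cylinder ball need not be the vertex $v_{i}$ at all, and it also remains open whether the value $\left(2\frac{1-r}{r}\right)^{s(r)}$ could be attained by some other ball. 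So: your analysis for $N\le4$ matches the paper's (unproved) outline, and your $N\ge5$ finding is a correct and worthwhile refutation of the conjecture as literally stated, not a proof of it.
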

\subsection*{Acknowledgement} We want to thank Maria Eugenia Mera
for her valuable help with the programming of the algorithm.

\end{document}